\def\D{\Bbb D}
\newcommand{\DU}{\overline{D}}
\def\s{\sigma}
\def\N{\Bbb N}
\def\om{\omega}
\def\e{\varepsilon}
\def\r{\rho}
\def\a{\alpha}
\newtheorem{theorem}{Theorem}
\newtheorem{lemma}[theorem]{Lemma}
\newtheorem{proposition}[theorem]{Proposition}
\newtheorem{corollary}[theorem]{Corollary}
\newtheorem{lettertheorem}{Theorem}
\newtheorem{letterlemma}[lettertheorem]{Lemma}
\theoremstyle{definition}
\newtheorem{example}[theorem]{Example}
\theoremstyle{remark}
\numberwithin{equation}{section}
\begin{document}
\title{Generalization of proximate order and applications}


\author{Igor Chyzhykov}
\address{School of Mathematics Science, Guizhou  Normal University, \newline
	Guiyang, Guizhou, 550001, China
}
\email{chyzhykov@yahoo.com}

\author{Petro Filevych}
\address{Department of Computational Mathematics and Programming, Lviv Polytechnic National University, Lviv, Ukraine}
\email{p.v.ﬁlevych@gmail.com}

\author{Jouni R\"atty\"a}
\address{Department of Physics and Mathematics, University of Eastern Finland, P.O. Box 111, FI-80101 Joensuu, Finland}
\email{jouni.rattya@uef.fi}

\subjclass[2010]{Primary 30D15, 30J99}

\keywords{lower order, order of growth, proximate order}

\date{\today}


\begin{abstract}
We introduce a concept of a quasi proximate order which is a generalization of a proximate order and allows us
to study efficiently analytic functions whose order and lower order of growth are different. We prove an existence theorem of a quasi proximate order, i.e. a counterpart of Valiron's theorem for a proximate order. As applications, we generalize and complement some results of M. Cartwright and C.~N.~Lin\-den on asymptotic behavior of analytic functions in the unit disc.
\end{abstract}

\maketitle

\section{Introduction and main results}

This paper concerns the asymptotic behavior of analytic functions $f$ whose order of growth $\sigma_M(f)$ and the lower order $\lambda_M(f)$, defined via the maximum modulus function $M(r,f)=\max\{ |f(z)|:|z|=r \}$, are distinct. In particular, we search for asymptotic lower estimates for $\log|f|$ and its $L^p$-mean on the circle of radius $r$ centered at the origin. Lindel\"of proximate order $\rho(r)$ has been widely and effectively used in the frame of such problems~\cite{GO,JK,Levin,Linden1956}. It allows one to majorize $\log M(r,f)$ by the flexible function $V(r)=r^{\rho(r)}$, where $\rho(r)\to\rho_M(f)$, as $r$ approaches either to $\infty$ (the case of entire functions) or to $1$ (the case of functions analytic in the unit disc). Indeed, by Valiron's theorem \cite{GO,JK,Levin} for each entire function of finite order there exists a proximate order $\rho(r)$ such that $\log M(r,f)\le V(r)$ for all $r$, and $\log M(r_n,f)=V(r_n)$ for some sequence $(r_n)$ tending to $\infty$. Such a proximate order is called a proximate order of an entire function $f$. This concept plays an essential role in the theory of functions of completely regular growth \cite{Levin}. However, the defect of this approach is that it completely ignores the value of the lower order. For entire functions $f$ of finite lower order $\lambda_M(f)$ there is a notion of a lower proximate order $\lambda(r)$~\cite{GO,Levin}, which allows to majorize $\log M(r,f)$ by $r^{\lambda_M(f)+o(1)}$ on a sequence of values of $r$ tending to $\infty$. Unfortunately, such a conclusion is in many cases far from being satisfactory. This leads us to the problem of constructing a majorant $V(r)$ for $\log M(r,f)$ such that, on one hand, it keeps the information about both the order $\rho_M(f)$ and the lower order $\lambda_M(f)$ well enough, and, on the other hand, it is sufficiently flexible. In particular we require that it satisfies Caramata's condition $V(2r)=O(V(r))$ as $r\to\infty$. It turns out that this is impossible in the frame of proximate order and its known generalizations. We solve this problem by introducing the notion of a quasi proximate order. It allows us to complement and generalize some results of M.~Cartwright \cite{CartWr33} and C.~N.~Linden~\cite{Li_mp,Li56_conj}.

We proceed towards the statements of our findings via necessary definitions. Let $1\le r_0<\infty$. A function $\sigma:[r_0,\infty)\to(0,\infty)$ is called a {\it quasi proximate order} if
there exist two constants $0\le\lambda<\rho<\infty$ and an associated function $A^*=A^*_\sigma:[r_0,\infty)\to(0,\infty)$ such that
		\begin{itemize}
    \item [(1)] $\sigma \in C^1[r_0,\infty)$;
    \item [(2)] $\displaystyle\limsup_{t\to\infty}\sigma(t)=\rho$;
    \item [(3)]	$\displaystyle\liminf_{t\to\infty}\sigma(t)=\lambda$;
    \item [(4)] $\limsup_{t\to\infty}|\sigma'(t)|t\log t<\infty$;
		\item [(5)] $A^*$ is nondecreasing and $A^*(t)\le t^{\sigma(t)}\le(1+o(1))A^*(t)$, as $t\to\infty$.
		\end{itemize}
Even though $t^{\sigma(t)}$ is not necessarily monotone, it follows from (2) and (4) that $t^{\sigma(t)}\asymp(2t)^{\sigma(2t)}$, as $t\to\infty$. Namely, (2) and (4) yield $\s(t)\le 2\r$, if $t\ge t_0$, and
	$$
	\s(t)-\frac{C}{\log t}\le\s(2t)\le\s(t)+\frac{C}{\log t},\quad t\ge t_0,
	$$
for some constants $C=C(\sigma)>0$ and $t_0=t_0(\sigma)>1$. 
These inequalities imply
$t^{\sigma(t)}\asymp(2t)^{\sigma(2t)}$ which together  with (5) yields  $A^*(2t)\lesssim A^*(t)$, as $t\to\infty$.

A function satisfying properties (1)--(3) and
	\begin{itemize}
    \item [(4')] $\limsup_{t\to\infty}|\sigma'(t)|t\log t=0$,
    \end{itemize}
is called {\it oscillating} or a {\it generalized proximate order}. If, in addition, $m<\lambda\le\rho<m+1$ for some $m\in\N\cup\{0\}$, then $\s$ is called a {\it Boutroux proximate order}. 
Such modifications of a proximate order, their properties and applications can be found in \cite[Chap.~2, \S5]{GO}, \cite{Chernolyas}, \cite{Boichuk}, \cite{MalKozSad}.
The notion of an oscillating proximate order was used to generalize the theory of functions of completely regular growth to classes of functions of non-regular growth~\cite{Boichuk,Chernolyas}. The main disadvantage of this approach is the lack of an existence theorem. All aforementioned generalizations can be used provided that   the upper limit (2) or the lower limit (3)  is finite. The reference \cite{Drasin74} deals with the case when it is not the case.
Finally, if $\rho=\lambda$ then a generalized proximate order coincides with a proximate order.

Our first main result shows that for each non-decreasing function $A$ satisfying $t^\alpha\lesssim A(t)\lesssim t^\beta$ for some $0\le\alpha<\beta<\infty$, there exists a quasi proximate order $\sigma$ such that the function $t^{\sigma(t)}$ is in a sense a smooth pointwise majorant of $A$ and still reflects the behavior of $A$ in a useful and natural way. The precise statement reads as follows.

\begin{theorem}\label{t:valir}
Let $1\le r_0<\infty$ and let $A$ be a positive continuous non-decreasing function on $[r_0,\infty)$ such that
		\begin{equation}\label{Eq:lim-sup-inf-conditions in Thm 1}
		\limsup_{t\to\infty}\frac{\log A(t)}{\log t}=\rho,
		\quad\liminf_{t\to\infty}\frac{\log A(t)}{\log t}=\lambda,
		\quad 0\le\lambda <\rho<\infty.
		\end{equation}
Then, for each fixed $\eta\in(0,\rho-\lambda)$, there exists a quasi proximate order $\sigma=\sigma_{\r,\lambda,\eta}:[r_0,\infty)\to(0,\infty)$, with the associated function $A^*=A^*_\sigma:[r_0,\infty)\to(0,\infty)$, such that
		\begin{itemize}
    \item[\rm(1)] $\displaystyle\limsup_{t\to\infty}\sigma(t)=\rho$;
    \item[\rm(2)] $\displaystyle\liminf_{t\to\infty}\sigma(t)=\lambda+\eta$;
    \item[\rm(3)] $A(t)\le t^{\sigma(t)}$ for all $r_0\le t<\infty$.
		\end{itemize}
\end{theorem}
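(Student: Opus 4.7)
The plan is to build $\sigma$ by smoothing a natural Lipschitz majorant of $\log A(t)/\log t$ in the variable $u=\log\log t$, under which condition~(4) of the definition of a quasi proximate order becomes a Lipschitz bound on $\sigma$ viewed as a function of $u$. I first replace $A$ by $B(t):=\max\{A(t),t^{\lambda+\eta}\}$, which is positive, continuous, non-decreasing, and whose logarithmic order has limsup $\rho$ and liminf $\lambda+\eta$. Writing $G(u):=\log B(t)/\log t$ with $t=e^{e^u}$, the monotonicity of $B$ is equivalent to the one-sided distributional bound $G'(u)\ge -G(u)\ge -\rho-o(1)$: in the $u$-metric $G$ may ascend arbitrarily fast but descends at rate at most $\rho+o(1)$.

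Next I pick a sufficiently large finite constant $c>0$ and consider the Lipschitz envelope
\[
h(u)=\sup_{v\ge u_0}\bigl\{G(v)-c\,|v-u|\bigr\},
\]
which is $c$-Lipschitz with $h\ge G$. Mollifying $h$ in $u$ with a shrinking window (plus a vanishing additive correction to retain the majorization) gives $\tilde h\in C^\infty$ with $|\tilde h'|\le c+o(1)$ and $\tilde h\ge G$. Setting $\sigma(t):=\tilde h(\log\log t)$ with a smooth extension near $r_0$, the chain rule yields condition~(4) of the definition; the sequence $A(t)\le B(t)=t^{G(u)}\le t^{\tilde h(u)}=t^{\sigma(t)}$ gives conclusion~(3) of the theorem; and $\limsup\sigma=\rho$ follows from $\sigma\ge G$, $G\le\rho+o(1)$, and the decay of the penalty $c|v-u|$ in the supremum, which kills off contributions from $v$'s far from $u$. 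Condition~(5) of the definition is verified by taking the associated $A^*$ to be a suitable non-decreasing modification of $t^{\sigma(t)}$ (an infimum over $s\ge t$ when $t^{\sigma(t)}$ is not itself monotone), the slow variation of $\sigma$ on the log-log scale giving $A^*(t)/t^{\sigma(t)}\to 1$.

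The delicate point, and the main obstacle, is $\liminf\sigma=\lambda+\eta$. The descent bound, applied to $g_A(u):=\log A(t)/\log t$ itself, forces each connected component $I$ of the open set $\{g_A<\lambda+\eta\}$ to have $u$-length bounded below by a positive constant depending only on $\rho,\lambda,\eta$: from $g_A'\ge -g_A$ one obtains $g_A(u)\ge(\lambda+\eta)e^{-(u-u_s)}$ for $u$ past the left endpoint $u_s$ of $I$, so $g_A$ cannot reach its liminf value $\lambda$ inside $I$ before a $u$-distance of at least $\log((\lambda+\eta)/\lambda)$ is covered. On any such component, $G\equiv\lambda+\eta$; choosing $c$ so that $2D:=2(\rho-\lambda-\eta)/c$ is below this lower bound on $|I|$, at interior points $w_n\in I$ at distance at least $D$ from $\partial I$, every $v\notin I$ contributes at most $\rho-cD=\lambda+\eta$ to the sup defining $h(w_n)$, whence $h(w_n)=\lambda+\eta$. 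A sequence $w_n\to\infty$ drawn from a succession of such components, whose existence is guaranteed by $\liminf g_A=\lambda<\lambda+\eta$, realizes the liminf after mollification.
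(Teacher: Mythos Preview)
Your approach is genuinely different from the paper's. The paper gives an explicit inductive piecewise construction: it selects sequences $r_n<r_n^*<r_n'<r_{n+1}$ with $d(r_n)\to\rho$, $d(r_n')=\lambda+\eta/2$, $(r_n^*)^{\lambda+\eta}=(r_n')^{\lambda+\eta/2}$, sets $\sigma$ constant on some blocks, on others lets $\sigma$ track a non-increasing envelope $D(t)=\max\{\max_{[t,r_n^*]}d,\ \lambda+\eta\}$ via a staircase in $\log\log t$, and builds $A^*$ piece by piece, checking monotonicity directly. Your Lipschitz-envelope idea in $u=\log\log t$ is more conceptual and does work, but the verification of condition~(5) --- the existence of the associated $A^*$ --- is a genuine gap as written.

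Your stated reason, ``the slow variation of $\sigma$ on the log-log scale giving $A^*(t)/t^{\sigma(t)}\to1$'', is not correct: a $c$-Lipschitz bound on $\tilde h$ in $u$ is perfectly compatible with $\tilde h(u)e^u$ dropping by an amount of order $e^u=\log t$ over a bounded $u$-interval, which would make $t^{\sigma(t)}/\inf_{s\ge t}s^{\sigma(s)}$ blow up like a positive power of $t$. What actually saves the construction is a fact you do not invoke. If you take $c>\rho$, then since $G'\ge -G$ gives $(G(u)+cu)'\ge c-G>0$ eventually, one has $G(v)-c(u-v)\le G(u)$ for all $v\le u$; hence the supremum defining $h(u)$ is always attained at some $v\ge u$. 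At such points either $h=G$ locally (so $(he^u)'=(Ge^u)'\ge0$) or $h'=+c$ (active $v^*>u$). In either case $(he^u)'\ge0$, so $t^{h(\log\log t)}$ is itself non-decreasing and can serve as $A^*$ before mollification. To carry this through mollification you need the window $\delta(u)$ to shrink like $o(e^{-u})=o(1/\log t)$, so that $|\tilde h(u)-h(u)|e^u\to0$; only then does $\inf_{s\ge t}s^{\sigma(s)}$ differ from $t^{\sigma(t)}$ by a factor $1+o(1)$.

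Two smaller points. Your assertion that \emph{every} component of $\{g_A<\lambda+\eta\}$ has $u$-length bounded below is false (a component where $g_A$ barely dips can be arbitrarily short); what is true, and what you actually use, is that the infinitely many components in which $g_A$ descends to, say, $\lambda+\eta/2$ have length at least $\log\frac{2(\lambda+\eta)}{2\lambda+\eta}$, by the same descent estimate. This also repairs the formula $\log\frac{\lambda+\eta}{\lambda}$, which is meaningless when $\lambda=0$.
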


It is tempting to think that in the statement (2) of Theorem~\ref{t:valir} one may replace $\lambda+\eta$ by $\lambda$. However, this is not possible without violating the condition $\limsup_{t\to\infty}|\sigma'(t)|t\log t<\infty$ of a quasi proximate order. In Section~\ref{Sec:example} we construct a non-decreasing function $A$ which shows this last claim. It also follows from the argument used that no generalized proximate order satisfying the limit conditions (2) and (3) of the definition exists for the constructed $A$. The proof of Theorem~\ref{t:valir} is also given in Section~\ref{Sec:example}. The proof itself is a technical handmade construction which relies on the use of suitable auxiliary functions induced by $A$ and inductive defining of $\sigma$ and $A^*$.   

Our primary interest in this paper relies on analytic functions in the unit disc $\D=\{z:|z|<1\}$. For this aim we first pull Theorem~\ref{t:valir} to the setting of unit interval $[0,1)$ by the substitution $t=\frac1{1-r}$, and then apply it to the logarithm of a suitable maximum modulus function. To be precise, recall that for an analytic function $f$ in $\D$, the order and the lower order are defined by
	$$
	\sigma_M (f) = \limsup_{r\to 1^-} \, \frac{\log^+ \log^+ M(r,f)}{\log{\frac{1}{1-r}}}
	$$
and
	$$
	\lambda_M(f) = \liminf_{r\to 1^-} \, \frac{\log^+ \log^+ M(r,f)}{\log{\frac{1}{1-r}}},
	$$
respectively. Let now $f$ be an analytic function in $\D$ such that $0\le\lambda_M(f)<\sigma_M(f)<\infty$. Write $A(t)=\log M(1-\frac 1t, f)$ for all $1\le t<\infty$. Then, for given $\eta\in(0,\sigma_M(f)-\lambda_M(f))$, there exist $\lambda$ and its associated function $A^*=A^*_\lambda$ on $[0,1)$ such that
\begin{itemize}
	\item [(1)] $\lambda \in C^{1}[0,1)$;
	\item [(2)] $\displaystyle\limsup_{r\to1^-} \lambda(t)=\sigma_M(f)$;
	\item [(3)]	$\displaystyle\liminf_{r\to1^-} \lambda(t)=\lambda_M(f)+\eta$;
	\item [(4)] $\displaystyle\limsup_{r\to1^-}|\lambda'(r)|(1-r)\log\frac1{1-r}<\infty$;
	\item [(5)] $A^*(r)\le \frac{1}{(1-r)^{\lambda(r)}} \le (1+o(1))A^*(r)$, as $r\to1^-$;
	\item [(6)] $A^*$ is nondecreasing and $A^*\left(\frac{1+r}{2}\right)\lesssim A^*(r)$ for all $0\le r<1$;
	\item [(7)] $\log M(r,f)\le\frac{1}{(1-r)^{\lambda(r)}}$ for all $0\le r<1$.
\end{itemize}
To see this, it is enough to set $\lambda(r)=\sigma(\frac{1}{1-r})$, where $\sigma$ is that of Theorem~\ref{t:valir} applied to $A$. Then (1)--(6) are immediate while (7) comes from the statement (3) of the said theorem. If the properties (1)--(7) hold, then $\lambda$ is called a {\it quasi proximate order of the analytic function} $f$ in~$\D$, related to $\eta\in(0,\sigma_M(f)-\lambda_M(f))$. Further, if 
\begin{itemize}
	\item [(4')] $\displaystyle\limsup_{r\to1^-}|\lambda'(r)|(1-r)\log\frac1{1-r}=0$,
\end{itemize}
then $\lambda$ is a {\it generalized proximate order} of $f$.
 
We now proceed towards the statement of our second main result. Some more definitions are needed. Let 
	\begin{gather*}
	m_p(r,\log|f|)=\left( \frac 1{2\pi} \int_0^{2\pi} |\log|f(re^{i\theta})||^p\, d\theta\right)^{\frac 1p}, \quad 0<r<1,
	\end{gather*}
and, by following~\cite{Li_mp}, define
	\begin{gather*}
	\rho_p(f)=\limsup _{r\to1^-}\frac{\log^+ m_p(r,\log|f|)}{-\log(1-r)}, \quad 
	\lambda_p(f)=\liminf _{r\to1^-}\frac{\log^+ m_p(r,\log|f|)}{-\log(1-r)}.
	\end{gather*}
It is known \cite{Li_mp} that $\rho_p(f)$ is an increasing function of $p$ and $p\rho_p(f)$ is convex on $(0,\infty)$.
Further, define $\rho_\infty(f)=\lim_{p\to\infty}\rho_p(f)$ as in~\cite{rho_infty}. Linden~\cite{Li_mp} proved the identity $\sigma_M(f)=\rho_\infty(f)$ and showed that 	
	\begin{equation}\label{e:linden_ineq}
	\rho_p(f)\le \sigma_M(f)\le \rho_p(f)+\frac 1p, \quad  0\le p<\infty, 
	\end{equation}
provided $\sigma_M(f)\ge 1$. In general, $\sigma_M(f)\le\rho_\infty(f)$ for all analytic functions in $\D$. Observe that the left-hand inequality in \eqref{e:linden_ineq} is no longer true if $\sigma_M(f)<1$. A Blaschke product~$B$ such that $\rho_p(B)=1-\frac 1p$ while $\sigma_M(B)=0$ gives a counter example~\cite{ChIJM}. Further properties of the order $\rho_\infty(f)$ can be found in \cite{rho_infty}.

Our aim is to establish a counterpart of \eqref{e:linden_ineq} for $\lambda_p(f)$ and $\lambda_M(f)$. This is what we obtain from our next result. Recall that the upper density of a measurable set $E\subset[0,1)$ is defined as
$\DU(E)=\limsup_{r\to1^-}\frac{m(E\cap[r,1))}{1-r}$, where $m(F)$ denotes the Lebesgue measure of the set $F$.

\begin{theorem} \label{t:p_lower_order}
Let $f$ be an analytic function in $\D$ such that it admits a generalized proximate order $\lambda$, and either $1<\lambda_M(f)<\sigma_M(f)<\infty$ or $0\le\lambda_M(f)<\sigma_M(f)\le 1$. Let $1\le p<\infty$ and $\varepsilon>0$. Then
	\begin{equation}\label{e:mp_final-countdown}
	m_p(r,\log|f|)\lesssim\frac{1}{(1-r)^{\lambda(r)\left(1-\frac1{\sigma_M(f)}\right)^++ 1+\varepsilon}}, \quad r\to 1^-.
	\end{equation}
In particular, if $\eta\in(0,\sigma_M(f)-\lambda_M(f))$ then 
	\begin{equation}\label{e:mp_final-countdown2}
	m_p(r,\log|f|)\lesssim\frac{1}{(1-r)^{\left(\lambda_M(f)+\eta\right)\left(1-\frac1{\sigma_M(f)}\right)^++ 1+\varepsilon}},\quad r\in E,
	\end{equation}
where $E=E_{\varepsilon,\eta}\subset[0,1)$ is of upper density one.
\end{theorem}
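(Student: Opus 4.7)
The plan is to derive \eqref{e:mp_final-countdown} from the pointwise majorant $\log M(r,f)\le V(r):=(1-r)^{-\lambda(r)}$ provided by property~(7) of the generalized proximate order~$\lambda$; the refined \eqref{e:mp_final-countdown2} then follows on the set $E:=\{r\in[0,1):\lambda(r)\le\lambda_M(f)+\eta+\e'\}$, which by property~(3) (the liminf identity) has upper density one for every $\e'>0$---substitute the pointwise bound into \eqref{e:mp_final-countdown} and relabel $\e$.

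In the regime $0\le\lambda_M(f)<\sigma_M(f)\le 1$ the exponent $(1-1/\sigma_M(f))^+$ vanishes and the claim reduces to $m_p(r,\log|f|)\lesssim(1-r)^{-1-\e}$. Splitting $|\log|f||=\log^+|f|+\log^+\!(1/|f|)$, the first summand is pointwise bounded by $V(r)\lesssim(1-r)^{-1-\e/2}$; the second is controlled in $L^1$ by the Jensen--Nevanlinna identity $m(r,1/f)\le T(r,f)+O(1)\lesssim V(r)$ and then lifted to $L^p$ via a Poisson-integral argument on $|z|=(1+r)/2$ whose $(1-r)^{-1/p}$ defect is absorbed into $(1-r)^{-\e/2}$.

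The principal case $1<\lambda_M(f)<\sigma_M(f)<\infty$ follows Linden's two-radius scheme from~\cite{Li_mp}. For $r<R<1$ the Poisson--Jensen decomposition $f=B_RF_R$ on $|z|\le R$ splits $\log|f|$ into a harmonic part $\log|F_R|$ (with boundary data $\log|f|$ on $|z|=R$, since $|B_R|=1$ there) and a Blaschke part $\log|B_R|$. The Poisson bound $\|\log|F_R|\|_{L^\infty(|z|=r)}\lesssim V(R)/(R-r)$, combined with the Nevanlinna $L^1$ estimate $m_1(R,\log|f|)\lesssim T(R,f)\lesssim V(R)$ and the interpolation $\|u\|_{L^p}\le\|u\|_{L^\infty}^{1-1/p}\|u\|_{L^1}^{1/p}$, yields $m_p(r,\log|F_R|)\lesssim V(R)/(R-r)^{1-1/p}$; for the Blaschke factor, the uniform bound $\|\log|b_a|\|_{L^p(|z|=r)}\lesssim 1$ together with the Jensen counting estimate $n(R,f)\lesssim V(R)/(1-R)$ gives a comparable $L^p$ contribution. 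Optimizing the auxiliary radius $R$ against $p$ and invoking $V(R)=V(r)^{1+o(1)}$ (which follows from (4$'$) because $\int_r^R|\lambda'(s)|\log\frac{1}{1-s}\,ds=o(1)$) produces the claimed exponent in \eqref{e:mp_final-countdown}.

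The main obstacle is sharpening the interpolation so that the $V$-exponent becomes $1-1/\sigma_M(f)$: a direct choice $R=(1+r)/2$ yields only the cruder $\lambda(r)+1-1/p$ in the denominator, so the gap must be closed using the log-convexity of $q\mapsto q\rho_q(f)$ together with the identity $\rho_\infty(f)=\sigma_M(f)$ from~\cite{rho_infty}, effectively interpolating between the critical exponent $p=\sigma_M(f)/\lambda(r)$ and the $p=\infty$ endpoint. The weak regularity~(4$'$) of a generalized proximate order must be exploited to ensure that the oscillation of $\lambda$ over the thin annulus $r\le|z|\le R$ contributes only an $o(1)$ to the exponent of $V(r)$, which is then absorbed into~$\e$.
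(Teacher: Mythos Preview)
Your outline misses the mechanism that actually produces the exponent $\lambda(r)\bigl(1-\frac1{\sigma_M(f)}\bigr)$. The Poisson--Jensen decomposition $f=B_RF_R$ on $\{|z|\le R\}$ together with $L^1$--$L^\infty$ interpolation gives, as you yourself note, at best an exponent $\lambda(r)+1-\frac1p$; your proposed fix via the convexity of $q\mapsto q\rho_q(f)$ and $\rho_\infty(f)=\sigma_M(f)$ is circular, since $\rho_q(f)$ is by definition the $\limsup$ of the very quantity you are trying to estimate pointwise, and convexity only relates those $\limsup$'s among themselves---it cannot improve any individual $m_p$ bound at a given $r$, let alone replace the fixed $\frac1p$ by the $r$-dependent $\frac{\lambda(r)}{\sigma_M(f)}$. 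The Jensen count $n(R,f)\lesssim V(R)/(1-R)$ you invoke for the Blaschke piece is likewise too coarse by exactly the same factor.

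The paper extracts the gain $1-\frac1{\sigma_M(f)}$ from a different source. Linden's polar-rectangle estimate (Theorem~\ref{TheoremLinden2}) bounds $n_1(R,f)$ by an integral $\int_0^R\log^+M(t,f)(R-t)^{\frac1\alpha-1}\,dt$; splitting that integral at the radius $R^*$ defined by $(1-R^*)^{\sigma_M(f)}=(1-R)^{\lambda(R)}$---using $\log M\le(1-t)^{-\sigma_M(f)-\e/2}$ on $[0,R^*]$ and the monotonicity of $A^*$ on $[R^*,R]$---yields the refined count $n_1(R,f)\lesssim(1-R)^{-1-\lambda(R)(1-1/\sigma_M(f))-\e}$ (Lemma~\ref{l:i_est}, Proposition~\ref{p:i_est}). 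One then factors $f=z^m\mathcal{P}\,g$ with the canonical product $\mathcal{P}$ of \eqref{e:canon_prod} over \emph{all} zeros (not a finite Blaschke product on a subdisc): the zero count feeds into $m_p(r,\log|\mathcal{P}|)$ through Theorem~\ref{t:tsuji} and Lemma~\ref{l:lin_can} (this is Lemma~\ref{l:lin1}), while the zero-free factor $g$ is controlled by the generalized Cartwright two-sided bound (Theorem~\ref{t:cartright}), which turns the one-sided estimate $\log|g|\lesssim\psi\bigl(\frac1{1-|z|}\bigr)\log\frac1{1-|z|}$ into a pointwise bound on $|\log|g||$. It is precisely this last step that forces $\lambda$ to satisfy the stronger condition (4$'$) of a \emph{generalized} proximate order and that is responsible for the hypothesis $\lambda_M(f)>1$ in the theorem.
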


The method we employ in the proof does not allow us to treat the case $\lambda_M(f)\le 1<\sigma_M(f)$ and therefore it remains unsettled. 

\begin{corollary} \label{c:m_p}
Let $f$ be an analytic function in $\D$ such that it admits a generalized proximate order $\lambda$, and either $1<\lambda_M(f)<\sigma_M(f)<\infty$ or $0\le\lambda_M(f)<\sigma_M(f)\le 1$. Let $\eta\in(0,\sigma_M(f)-\lambda_M(f))$ and  $1\le p<\infty$. Then 
	\begin{equation}\label{pilili}
	\lambda_p(f)\le\lambda_M(f)\left(1-\frac1{\sigma_M(f)}\right)^++1, \quad \lambda_M(f)\le \lambda_p(f)+\frac 1p.
	\end{equation}
\end{corollary}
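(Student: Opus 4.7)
The plan is to prove the two inequalities in \eqref{pilili} independently.

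For the left inequality, the strategy is to invoke the set-wise estimate \eqref{e:mp_final-countdown2} of Theorem~\ref{t:p_lower_order}, which for any $\eta\in(0,\sigma_M(f)-\lambda_M(f))$ and any $\varepsilon>0$ furnishes a set $E_{\varepsilon,\eta}\subset[0,1)$ of upper density one on which
$$m_p(r,\log|f|)\lesssim(1-r)^{-\alpha},\qquad \alpha=(\lambda_M(f)+\eta)\!\left(1-\tfrac{1}{\sigma_M(f)}\right)^++1+\varepsilon.$$
Because $E_{\varepsilon,\eta}$ has upper density one, it must accumulate at $r=1$, so one can pick $r_n\in E_{\varepsilon,\eta}$ with $r_n\to 1^-$; taking logarithms, dividing by $\log(1/(1-r_n))$, and passing to $\liminf$ along this sequence will yield $\lambda_p(f)\le\alpha$. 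Letting first $\varepsilon\to 0^+$ and then $\eta\to 0^+$ will give the left-hand inequality of \eqref{pilili}.

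The right inequality in fact does not rely on the generalized proximate order hypothesis and can be obtained from a standard Hardy-type sub-mean-value argument. The plan is to observe that $\log^+|f|$ is subharmonic on $\D$ and that, since $p\ge 1$, the composition $(\log^+|f|)^p$ is also subharmonic. Applying the area sub-mean-value inequality on the disc $D(z,(1-|z|)/2)$, converting to polar coordinates, and using the monotonicity of $s\mapsto M_p(s,\log^+|f|)$ in $s$, one obtains the pointwise bound
$$\log^+M(r,f)\lesssim(1-r)^{-1/p}\,m_p\!\Bl\tfrac{1+r}{2},\log|f|\Br,\quad r\to 1^-.$$
The hypothesis $\sigma_M(f)>0$ forces $M(r,f)\to\infty$, so taking logarithms is legitimate; after dividing by $\log(1/(1-r))$, using $\log m_p\le\log^+m_p$, and noting that the substitution $r\mapsto(1+r)/2$ inside $m_p$ alters the denominator only by a factor tending to $1$, the $\liminf$ as $r\to 1^-$ of the resulting inequality will produce $\lambda_M(f)\le\lambda_p(f)+\frac{1}{p}$.

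The main technical step in the plan is the Hardy-type pointwise estimate on $\log^+M(r,f)$, which is the mechanism behind Linden's right-hand bound in \eqref{e:linden_ineq} and transplants here verbatim. Beyond this, the argument is an exercise in bookkeeping with $\liminf$, supplemented by the elementary observation that a subset of $[0,1)$ of upper density one must accumulate at $r=1$.
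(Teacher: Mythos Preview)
Your proposal is correct and follows essentially the same route as the paper. For the first inequality you argue exactly as the paper does: extract a sequence $r_n\to1^-$ from the density-one set furnished by \eqref{e:mp_final-countdown2}, pass to the $\liminf$, and let $\varepsilon,\eta\downarrow0$. For the second inequality both you and the paper reduce to the pointwise estimate $\log^+M(r,f)\lesssim(1-r)^{-1/p}m_p\bigl(\tfrac{1+r}{2},\log|f|\bigr)$; the only difference is that the paper obtains it via the Poisson--Jensen formula combined with H\"older's inequality (citing \cite{Li_mp}), whereas you obtain it from the area sub-mean-value inequality applied to the subharmonic function $(\log^+|f|)^p$ on $D(z,(1-|z|)/2)$ --- note that the correct $(1-r)^{-1/p}$ power comes out because this disc lies in an annulus of width $\asymp(1-r)$, so after polar coordinates one picks up only one factor of $(1-r)$ in the numerator against $(1-r)^2$ in the denominator.
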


Corollary~\ref{c:m_p} complements \eqref{e:linden_ineq}. Indeed, the left-hand inequality in \eqref{e:linden_ineq} is the limit case $\lambda_M(f)=\sigma_M(f)$ of the first inequality in \eqref{pilili} with $\sigma_M(f)\ge1$. However, the sharpness of Corollary~\ref{c:m_p} itself remains unsettled. We just note here that a similar phenomenon appears in estimates of growth for the central index of analytic functions in the unit disc in terms of orders of their maximum term~\cite{Sons68}.
	
The proof of Theorem~\ref{t:p_lower_order} occupies most of the body of the remaining part of the paper. It follows the scheme of the proof of \cite[Theorem~1]{Li_mp} to some extent but a substantial amount of different arguments is needed. Some of the auxiliary results obtained on the way to the proof are of independent interest. The first step towards to proof of the theorem is to estimate the number of zeros of an analytic function in polar rectangles in terms of its proximate order. Our result in this direction is Proposition~\ref{p:i_est} which is proved in Section~\ref{sec:polar-rectangle}. The second step is to establish an appropriate generalization of the following lemma due to Cartwright~\cite[Lemma~1]{CartWr33}.

\begin{letterlemma}\label{Lemma:B} Let $0<R<\infty$, $0<\alpha<\infty$ and $\frac{\pi}{2\alpha}<\beta<\infty$. Let $F$ be analytic and non-vanishing in the truncated sector $\Omega=\Omega_{R,\beta}=\{re^{i\theta}:R\le r<\infty,\,|\theta|\le \beta\}$ such that
	\begin{equation}\label{e:cart_assum}
	\log|F(z)|<B|z|^\alpha,\quad z\in\Omega,
	\end{equation}
	for some constant $B>0$. Then, for given $\delta>0$, there exists a constant $K=K(\delta)>0$ such that
	$$
	\log|F(re^{i\theta})|>-KBr^\alpha, \quad |\theta|\le \beta-\delta,\quad R\le r<\infty.
	$$
\end{letterlemma}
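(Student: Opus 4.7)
The plan is to exploit the non-vanishing of $F$ to pass to the analytic logarithm, and then to invoke the Borel--Carath\'eodory inequality on discs of size comparable to $|z|$ inside $\Omega$ to promote the one-sided bound $\log|F|\le B|z|^{\alpha}$ to the required matching lower bound on a sub-sector.

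Since $F$ is analytic and non-vanishing on the simply connected domain $\Omega$, the function $g:=\log F$ is a single-valued analytic function on $\Omega$ with $\operatorname{Re} g=\log|F|$; the hypothesis reads $\operatorname{Re} g(z)\le B|z|^{\alpha}$. To straighten the geometry I would apply the conformal map $w=z^{\pi/(2\beta)}$, which sends $\Omega$ onto a truncated right half-plane
\[
\Omega^{*}=\{w:|\arg w|\le\pi/2,\ |w|\ge R^{\pi/(2\beta)}\},
\]
and the sub-sector $|\theta|\le\beta-\delta$ onto $\{|\arg w|\le\pi/2-\delta'\}$ with $\delta':=\pi\delta/(2\beta)>0$. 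Setting $H(w):=F(w^{2\beta/\pi})$ and $\tau:=2\alpha\beta/\pi$, the hypothesis becomes $\log|H(w)|\le B|w|^{\tau}$ on $\Omega^{*}$, and the standing assumption $\beta>\pi/(2\alpha)$ translates precisely into $\tau>1$, the regime in which the growth of $H$ dominates the natural scale of discs that fit inside $\Omega^{*}$.

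For each $w_{0}$ with $|\arg w_{0}|\le\pi/2-\delta'$ and $|w_{0}|$ large, the distance from $w_{0}$ to $\partial\Omega^{*}$ is at least $|w_{0}|\sin\delta'$, so $D(w_{0},R_{0})$ with $R_{0}:=\tfrac12|w_{0}|\sin\delta'$ lies in $\Omega^{*}$ and carries the bound $\operatorname{Re}\log H\le B(3|w_{0}|/2)^{\tau}=:M_{0}$. The Borel--Carath\'eodory inequality applied to the analytic function $\log H$ gives
\[
|\log H(w)-\log H(w_{0})|\le \tfrac{2|w-w_{0}|}{R_{0}-|w-w_{0}|}\bigl(M_{0}-\operatorname{Re}\log H(w_{0})\bigr),\qquad |w-w_{0}|<R_{0},
\]
which after choosing $|w-w_{0}|\le R_{0}/2$ yields the comparable two-sided bound $|\log H(w)|\lesssim M_{0}+|\log H(w_{0})|$ on a disc of size $\asymp R_{0}$. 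Chaining this estimate through a finite family of overlapping discs (of cardinality depending only on $\delta$) covering the arc $\{|w|=|w_{0}|,\ |\arg w|\le\pi/2-\delta'\}$ transports the bound from any chosen reference point to the target $w_{0}$.

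The central technical obstacle is controlling the accumulation of constants along the chain and furnishing a base value to start from: the former is handled by keeping the displacement at each link a small fraction of $R_{0}$ so that the per-step Harnack-type factor remains close to $1$ and only a constant $K=K(\delta)$ accumulates, while the latter is resolved by choosing the reference point near the inner truncation radius $|w|=R^{\pi/(2\beta)}$, where $\log|H|$ is a finite quantity determined by the given data. The resulting bound $\log|H(w)|\ge -KB|w|^{\tau}$ on $|\arg w|\le\pi/2-\delta'$ then unwinds, via the conformal change of variables $w=z^{\pi/(2\beta)}$, to $\log|F(re^{i\theta})|>-KBr^{\alpha}$ on $|\theta|\le\beta-\delta$, $R\le r<\infty$, as required.
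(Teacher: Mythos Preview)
The paper does not prove Lemma~\ref{Lemma:B}: it is quoted verbatim from Cartwright~\cite[Lemma~1]{CartWr33} and then \emph{used} as a black box in the proof of Lemma~\ref{l:cartwr}. So there is no ``paper's own proof'' to compare your argument against.

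As for the proposal itself, there is a genuine gap in the chaining step. You chain Borel--Carath\'eodory along the arc $\{|w|=|w_0|,\ |\arg w|\le\pi/2-\delta'\}$ with a number of links depending only on~$\delta$---that part is fine---but you then say that the base value is obtained ``near the inner truncation radius $|w|=R^{\pi/(2\beta)}$''. That base point is \emph{not} on the arc; to reach the arc from the inner radius you must also chain radially, and a radial chain from $R'$ to $|w_0|$ needs $\asymp\log|w_0|$ overlapping discs of radius comparable to the current modulus. Each radial step of Borel--Carath\'eodory (or Harnack for the positive harmonic function $M_k-\log|H|$) contributes a multiplicative factor $\mu=\dfrac{R_0+r}{R_0-r}>1$, so after $n\asymp\log|w_0|$ steps the accumulated factor is $\mu^{n}\asymp|w_0|^{\log\mu/\log\lambda}$. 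A short computation shows that $\log\mu/\log\lambda\le\tau$ forces $\tau>2$ even in the most favourable geometry ($c\to1$ on the positive real axis), so for $1<\tau\le2$ the chain produces a bound $|w_0|^{\gamma}$ with $\gamma>\tau$ and you do \emph{not} recover $\log|H(w_0)|\ge -KB|w_0|^{\tau}$. The phrase ``keeping the displacement at each link a small fraction of $R_0$'' does not help: shrinking the step size makes $\mu$ closer to~$1$ but proportionally increases the number of radial steps, and the product stays a fixed power of~$|w_0|$.

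A secondary issue: you assert that $\Omega$ is simply connected. If $\alpha\le\tfrac12$ then the hypothesis $\beta>\pi/(2\alpha)\ge\pi$ forces the ``sector'' to be the full annulus $\{|z|\ge R\}$, which is not simply connected, and a single-valued branch of $\log F$ need not exist. In Cartwright's applications one has $\alpha>1/2$, but your write-up does not restrict~$\alpha$.

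To repair the argument one cannot rely on disc-by-disc Harnack/Borel--Carath\'eodory propagation alone; a global device (Phragm\'en--Lindel\"of with an auxiliary harmonic majorant built from $\operatorname{Re}(z^{\alpha})$, or an appropriate conformal reduction followed by a Carleman/Nevanlinna estimate, as in Cartwright's original paper) is needed to exploit the hypothesis $\beta>\pi/(2\alpha)$ in a way that controls growth at infinity without iterating a lossy local inequality.
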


Lemma~\ref{Lemma:B} plays a crucial role in the proof of \cite[Theorem~2]{CartWr33}. Linden~\cite{Li56_conj} generalized Cartwright's lemma to the case in which the power function $|z|^\alpha$ is replaced by $V(|z|)=|z|^{\rho(|z|)}$, where $\rho(r)$ is a proximate order of $F$, and applied it to analytic functions in the unit disc. In \cite{NikNk74} and \cite{Bor_min} the authors considered the question of when the conclusions of the lemma can be strengthened. They showed that indeed a stronger conclusion can be made if certain extra hypothesis on $V$ is imposed.

In this paper we are interested in the question of {\it to what extent the hypotheses of Lemma  \ref{Lemma:B} can be relaxed and still have the same lower estimate?} It appears that it is enough to assume that $\rho(r)$ is a generalized proximate order. It is worth underlining here that this allows $f$ to be of non-regular growth. Moreover, it seems that the conclusion is no longer true for a quasi proximate order. Our generalization of Cartwright's lemma reads as follows.
 
\begin{proposition}\label{cor:cartright} Let $l:[1,\infty)\to(0,\infty)$ be a generalized  proximate order such that $0<\liminf_{t\to\infty}l(t)=l_1<\limsup_{t\to\infty}l(t) =l_2<\infty$. Let $\varepsilon>0$, $0<q<1$ and $0<R<\infty$. Let $G$ be analytic and non-vanishing such that 
 	\begin{equation*}
 	\log|G(re^{i\theta})|<r^{\frac{l(r)}{1+\e}}
 	\end{equation*}
 	on the domain $\left\{re^{i\theta}:R<r<\infty,\,|\theta|\le\frac{\pi}{2l(r)q}\right\}$. Then, for each $0<\delta<\pi/2l_2q$, we have
 	\begin{equation*}
 	\log|G(re^{i\theta})|
 	>-r^{l(r)}, \quad |\theta|\le\frac{\pi}{2l(r)q}-\delta,\quad r\to \infty.
 	\end{equation*}
 \end{proposition}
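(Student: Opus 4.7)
\emph{Reductions.} For $|z|>1$ the hypothesis $\log|G(z)|<|z|^{l(|z|)/(1+\varepsilon)}$ becomes stronger as $\varepsilon$ grows, so proving the statement for arbitrarily small $\varepsilon$ yields it for all $\varepsilon>0$. We therefore assume $\varepsilon<(1-q)/q$, which guarantees $q(1+\varepsilon)<1$ (this inequality will be used to verify the aperture condition $\beta>\pi/(2\alpha)$ in Cartwright's Lemma~\ref{Lemma:B}). The defining condition $|l'(t)|t\log t=o(1)$ of a generalized proximate order yields $l(r)=l(r_0)+o(1)$ uniformly for $r\in[r_0^{1-\eta},r_0^{1+\eta}]$ as $r_0\to\infty$, for every fixed $\eta>0$. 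Fix $\eta\in(0,\varepsilon)$ so that $(1+\eta)/(1+\varepsilon)<1$.

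\emph{Local application.} Fix a large $r_0$ and $|\theta_0|\le\pi/(2l(r_0)q)-\delta$. Consider the annular sector
\[
\Omega_{r_0}=\{z:\ r_0^{1-\eta}\le|z|\le r_0^{1+\eta},\ |\arg z|\le\pi/(2l(r_0)q)-\delta/2\},
\]
which lies inside the hypothesis domain by slow variation. There the hypothesis becomes $\log|G(z)|\le M_{r_0}:=r_0^{l(r_0)(1+\eta)/(1+\varepsilon)+o(1)}$, and crucially $M_{r_0}=o(r_0^{l(r_0)})$ since $(1+\eta)/(1+\varepsilon)<1$. Since $G$ is non-vanishing, $\log G$ is analytic on $\Omega_{r_0}$, and via $z=e^w$ this sector becomes a rectangle around $w_0=\log r_0$ with half-height $\pi/(2l(r_0)q)-\delta/2$ and half-width $\eta\log r_0$. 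Applying the Borel--Carath\'eodory inequality to $H(w)=\log G(e^w)$ in a disk inside this rectangle centered at $w_0$ (with radius slightly below the half-height, so that $w_0+i\theta_0$ lies well inside) yields $|H(w_0+i\theta_0)-H(w_0)|\lesssim M_{r_0}+|H(w_0)|$, hence
\[
\log|G(r_0e^{i\theta_0})|\ge-C(M_{r_0}+|\log G(r_0)|).
\]

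\emph{Reference bound and main obstacle.} It remains to bound $|\log G(r_0)|$ by something of order strictly less than $r_0^{l(r_0)}$. Its real part is controlled above by the hypothesis and below by an application of Cartwright's Lemma~\ref{Lemma:B} in the fixed maximal sector $\{|\arg z|<\pi/(2l_2q)\}$ with exponent $l_2/(1+\varepsilon)$ (which is permitted precisely because $q(1+\varepsilon)<1$); this gives $\log|G(r_0)|\ge-K_1r_0^{l_2/(1+\varepsilon)}$. The imaginary part is handled by fixing a branch of $\log G$ along the positive real axis. The main obstacle is that $l_2/(1+\varepsilon)$ may exceed $l(r_0)$ when $l(r_0)$ is close to $l_1$, so the coarse global reference bound is insufficient. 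This is overcome by iterating the local Borel--Carath\'eodory estimate across a chain of overlapping annular sectors, with the bound obtained at each scale serving as the reference for the next; slow variation of $l$ guarantees that after boundedly many iterations the effective reference exponent reduces to $l(r_0)+o(1)$, yielding the sharp lower bound $\log|G(r_0e^{i\theta_0})|>-r_0^{l(r_0)}$ for $r_0$ sufficiently large.
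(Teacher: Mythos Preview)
Your approach is genuinely different from the paper's, and the decisive iteration step does not work as stated.

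The paper never works locally. It first proves Lemma~\ref{l:cartwr}: writing the curvilinear domain in logarithmic coordinates as the strip $\{u+iv:|v|\le\omega(u)\}$ with $\omega(u)=\pi/(2l(e^{u})q)$, it applies Warschawski's theorem (Theorem~\ref{t:war}) to obtain a conformal map $\zeta$ onto the straight strip $|{\Im w}|\le\pi/2$ with the asymptotic $\zeta(\log r+i\theta)=k+q\int_{1}^{r}l(s)s^{-1}\,ds+iq\theta l(r)+o(1)$. Composing with exponentials gives a conformal map $\psi$ from a fixed sector $S_{q}(\alpha)$ onto the curvilinear domain with $|\psi^{-1}(re^{i\theta})|^{\alpha}\sim e^{k/q}r^{L(r)}$, where $L(r)=(\log r)^{-1}\int_{1}^{r}l(s)s^{-1}\,ds$. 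Cartwright's Lemma~\ref{Lemma:B} is then applied \emph{once} to $F=G\circ\psi$ in that fixed sector and the conclusion $-Kr^{L(r)}$ is pulled back. Proposition~\ref{cor:cartright} follows because condition (4') of a generalized proximate order gives, via one integration by parts, $L(r)\sim l(r)$; so $Kr^{L(r)}\le r^{l(r)}$ for large $r$.

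Your local Borel--Carath\'eodory step is sound (the imaginary-part issue is harmless: apply the inequality to $H-H(w_{0})$, which only needs a lower bound on $\log|G(r_{0})|$, not on $|\log G(r_{0})|$). The gap is the iteration. Each step yields $\log|G(r_{k+1})|\ge(1+C)\log|G(r_{k})|-CM_{r_{k}}$, so propagating from a reference point $r_{*}$ (where the coarse Cartwright bound with exponent $\alpha\approx l_{2}/(1+\varepsilon)$ is acceptable because $l(r_{*})\ge\alpha$) to the target $r_{0}$ multiplies the bound by $(1+C)^{K}$, with $K$ the number of steps. In $w=\log z$ coordinates the strip has \emph{bounded} half-height $\asymp 1/l$, so each Borel--Carath\'eodory disk has radius $O(1)$ and $K\asymp|\log r_{0}-\log r_{*}|$. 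Slow variation of $l$ works \emph{against} you here, not for you: since $l(r^{a})-l(r)\to0$ for every fixed $a$, moving $l$ from a value near $l_{1}$ to a value $\ge l_{2}/(1+\varepsilon)$ forces $|\log r_{0}-\log r_{*}|\to\infty$, hence $K\to\infty$, and the accumulated factor $(1+C)^{K}$ becomes an uncontrolled positive power of $r_{0}$. There is no mechanism in your scheme by which ``boundedly many iterations'' reduce the reference exponent to $l(r_{0})+o(1)$; encoding the global geometry of the variable-width domain is exactly what the conformal-mapping route accomplishes in one stroke.
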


The proposition allows us to deduce the following result concerning real parts of analytic functions in the unit disc. It is a counterpart of \cite[Theorem 3]{Li56_conj} and of independent interest.
 
\begin{theorem}\label{t:cartright}
	Let $\sigma$ be a generalized proximate order such that $\lambda(r)=\sigma(1/(1-r))$ satisfies $1<\liminf_{r\to1^-}\lambda(r)<\limsup_{r\to1^-}\lambda(r)<\infty$. Let $\varepsilon>0$ and let $f$ be analytic in $\D$ such that $f(0)=0$ and
	\begin{equation}\label{Eg:hypothesis-cartright}
	\Re f(re^{i\theta})<\frac{1}{(1-r)^{\lambda(r)}},\quad 0\le \theta<2\pi, \quad r_0<r<1,
	\end{equation}
	for some $r_0\in(0,1)$. Then 
	$$
	|\Re f(re^{i\theta})|<\frac{1}{(1-r)^{(1+\varepsilon)\lambda(r)}},\quad 0\le \theta<2\pi, \quad r\to1^-.
	$$
\end{theorem}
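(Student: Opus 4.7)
The plan is to reduce the theorem to the generalised Cartwright lemma, Proposition~\ref{cor:cartright}, via a conformal change of variables. Put $G=e^f$, which is analytic and non-vanishing on $\D$ with $\log|G|=\Re f$, so that the hypothesis \eqref{Eg:hypothesis-cartright} becomes $\log|G(re^{i\theta})|<(1-r)^{-\lambda(r)}$; the conclusion is equivalent to the one-sided lower bound $\log|G(re^{i\theta})|>-(1-r)^{-(1+\varepsilon)\lambda(r)}$, the upper side of $|\Re f|$ being immediate from the hypothesis since $\lambda(r)>0$. As both the hypothesis and the desired conclusion are invariant under $f\mapsto f(e^{i\theta_0}\cdot)$, it suffices to prove the inequality at $z=r$, as $r\to 1^-$.

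Consider the conformal map $z=1-1/w$ from the half-plane $\Pi=\{w:\Re w>1/2\}$ onto $\D$ and set $H(w)=G(1-1/w)$, which is analytic and non-vanishing on $\Pi$. Writing $w=\rho e^{i\phi}$, one computes $1-|z|=\cos\phi/\rho+O(1/\rho^2)$ as $\rho\to\infty$, uniformly on $|\phi|\le\pi/2-\delta_0$ for each fixed $\delta_0>0$. The slowly-varying property of a generalised proximate order (condition (4')) gives $\sigma(\rho/\cos\phi)=\sigma(\rho)+o(1)$, and hence $\lambda(|z|)=\sigma(\rho)(1+o(1))$, uniformly on such compact ranges of $\phi$. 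Combining these with the hypothesis yields
\[
\log|H(\rho e^{i\phi})|<(1-|z|)^{-\lambda(|z|)}\le C(\delta_0)\,\rho^{\sigma(\rho)(1+o(1))},\quad |\phi|\le \pi/2-\delta_0,\ \rho\to\infty.
\]

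Fix $\varepsilon'\in(0,\varepsilon)$ and set $l(\rho)=(1+\varepsilon')\sigma(\rho)$. Then $l$ is again a generalised proximate order, and since $\lambda_1:=\liminf_{r\to1^-}\lambda(r)>1$ by hypothesis, $\liminf_{\rho\to\infty} l(\rho)=(1+\varepsilon')\lambda_1>1$. Choose $q\in(1/((1+\varepsilon')\lambda_1),\,1)$, so that $l(\rho)q\ge 1+\delta_1$ for some $\delta_1>0$ and all large $\rho$; equivalently $\pi/(2l(\rho)q)\le\pi/2-\delta_0$ for a uniform $\delta_0>0$. On the resulting $\rho$-dependent sector the previous estimate, together with $\sigma(\rho)(1+o(1))\le l(\rho)/(1+\varepsilon'')$ for any $\varepsilon''<\varepsilon'$ and $\rho$ large, verifies the hypothesis of Proposition~\ref{cor:cartright}. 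Its conclusion gives
\[
\log|H(\rho e^{i\phi})|>-\rho^{l(\rho)}=-\rho^{(1+\varepsilon')\sigma(\rho)},\qquad |\phi|\le\frac{\pi}{2l(\rho)q}-\delta,
\]
for any small $\delta>0$ and $\rho$ sufficiently large, and in particular along the positive real axis. Specialising at $w=\rho=1/(1-r)$ (so that $z=r$) produces $\Re f(r)>-(1-r)^{-(1+\varepsilon')\lambda(r)}>-(1-r)^{-(1+\varepsilon)\lambda(r)}$, and rotation invariance completes the proof.

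The main obstacle is the bookkeeping with the proximate orders across the conformal change: one must ensure that $l=(1+\varepsilon')\sigma$ remains a generalised proximate order with $\liminf l>1$, that the $\rho$-dependent sector $|\phi|\le\pi/(2l(\rho)q)$ is genuinely contained in $\Pi$ (where $H$ is defined and the hypothesis holds), and that the upper bound on $\log|H|$ can be forced into the exact shape $\rho^{l(\rho)/(1+\varepsilon'')}$ required by Proposition~\ref{cor:cartright}. The slowly-varying property (4') of $\sigma$ is used critically at this step to absorb the $\cos\phi$-dependent constants into an arbitrarily small $(1+o(1))$-factor in the exponent; it is precisely this point that makes the generalised (as opposed to merely quasi) proximate order hypothesis indispensable.
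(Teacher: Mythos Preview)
Your proof is correct and follows essentially the same route as the paper: fix $\theta$ by rotation, pass to the half-plane via $w=1/(1-z)$, check that the transplanted function satisfies the hypothesis of Proposition~\ref{cor:cartright} with $l=(1+\varepsilon')\sigma$ (the paper takes $l=(1+\varepsilon_1)(\sigma+\varepsilon_1)$, which is equivalent), and read off the lower bound at $\phi=0$. One small remark on your closing commentary: the estimate $\lambda(|z|)=\sigma(\rho)+o(1)$ already follows from the weaker quasi-proximate-order condition~(4), since $|\sigma(Ct)-\sigma(t)|=O(1/\log t)$ for bounded $C$; the place where the \emph{generalised} hypothesis~(4') is genuinely indispensable is inside Proposition~\ref{cor:cartright} itself (where one needs $L(r)\sim l(r)$), not in the change-of-variables bookkeeping.
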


Observe that the hypothesis $1<\liminf_{r\to1^-}\lambda(r)$ in Theorem~\ref{t:cartright} is necessary. Namely, if $\limsup_{r\to 1^-} \lambda(r)<1 $ one can only say that $|\Re f(re^{i\theta})|\lesssim\frac{1}{1-r}$ by~\cite{CartWr33}. This upper bound is the best possible as is seen by considering the function $f(z)=1-\frac{1}{(1-z)^\alpha}$ with $\alpha\in(0,1)$.

Proposition~\ref{cor:cartright} and Theorem~\ref{t:cartright} are established in Section~\ref{sec:cartright}. The proofs do not come free but here we only mention one specific tool used which is the Warschawski mapping theorem, stated as Theorem~\ref{t:war}. Finally, in Section~\ref{sec:final} we pull all the discussed things together, and prove Theorem~\ref{t:p_lower_order} and Corollary~\ref{c:m_p}. 

To this end, couple of words about the notation used throughout the paper. The letter $C=C(\cdot)$ will denote an absolute constant whose value depends on the parameters indicated
in the parenthesis, and may change from one occurrence to another.
We will use the notation $a\lesssim b$ if there exists a constant
$C=C(\cdot)>0$ such that $a\le Cb$, and $a\gtrsim b$ is understood
in an analogous manner. In particular, if $a\lesssim b$ and
$a\gtrsim b$, then we write $a\asymp b$ and say that $a$ and $b$ are comparable. Moreover, the notation $a(t)\sim b(t)$ means that the quotient $a(t)/b(t)$ approaches one as $t$ tends to its limit.

\section{Existence of quasi proximate order}\label{Sec:example}

In this section we prove Theorem~\ref{t:valir} and discuss the necessity of its hypotheses. We begin with the proof of the theorem.

\begin{proof}[Proof of Theorem \ref{t:valir}]
Define $d(t)=\frac{\log^+A(t)}{\log t}$ for all $t\in[\max\{r_0, e\},\infty)$. Let $(\e_n)_{n=1}^\infty$ be a strictly decreasing sequence of strictly positive numbers such that $\varepsilon_1<\min\{1,\eta\}$ and $\lim_{n\to\infty}\varepsilon_{n}=0$. The continuity of $A$ together with \eqref{Eq:lim-sup-inf-conditions in Thm 1} allows us to pick up the infinite sequences $(r_n)_{n\in\N}$, $(r_n')_{n\in\N}$ and $(r_n^*)_{n\in\N}$ such that
	\begin{itemize}
  \item[\rm(i)] $r_n<r_n^*<r_n'<r_{n+1}$ for all $n\in\N$;
  \item[\rm(ii)] $d(r_n')=\lambda+\frac\eta2$ for all $n\in\N$;
  \item[\rm(iii)] $d(r_1)=\rho -\varepsilon_{1}$ and $r_{n+1}=\min\{ r\ge r_n': d(r)=\rho -\varepsilon_{n+1}\}$ for all $n\in\N$;
  \item[\rm(iv)] $(r_n^*)^{\lambda+\eta}=(r_n')^{\lambda+\frac\eta2}$ for all $n\in\N$.
	\end{itemize}
The existence of such sequences can be seen, for example, by arguing as follows. First, consider a sequence $(r_n')_{n\in\N}$ consisting of infinitely many points satisfying (ii). Then define $(r_n^*)_{n\in\N}$ by (iv), and by passing to suitable subsequences if necessary we have $r_n'<r_{n+1}^*<r_{n+1}'$. By defining $(r_n)_{n\in\N}$ by (iii) we now have $r_n'<r_{n+1}$, and by passing once more to subsequences we obtain (i). Observe that the value of $d$ at the points $r_n$ and $r_n'$ is known precisely by (ii) and (iii), while (i), (ii) and (iv) together with the monotonicity of $A$ yield the estimate
	\begin{equation}\label{Eq:estimate d(r_n^*)}
	d(r_n^*)=\frac{\log^+A(r_n^*)}{\log r_n^*}
	\le\frac{\lambda+\eta}{\lambda+\frac{\eta}{2}}\frac{\log^+A(r_n')}{\log r_n'}
	=\frac{\lambda+\eta}{\lambda+\frac{\eta}{2}}d(r_n')=\lambda+\eta,\quad n\in\N.
	\end{equation}

Let now $E_1=\{r\in[r_0, r_1^*]:d(r)\ge\rho-\varepsilon_1\}$ and
	$$
	E_n=\{r\in[r_n, r_n^*]: d(r)\ge \rho-\varepsilon_n\}, \quad n\in\N\setminus\{1\}.
	$$
Then each $E_n$ is non-empty by the property (iii). Write $M_n=\max\{d(r):r\in E_n\}=\max\{d(r):r\in[r_n, r_n^*]\}$, and let $R_n\in E_n$ such that $d(R_n)=M_n$. Then $M_n\to\rho$, as $n\to\infty$, and, by fixing $r_1$ sufficiently large, $\rho-\varepsilon_n\le M_n\le\rho+\varepsilon_1$ for all $n\in\N\setminus\{1\}$ by \eqref{Eq:lim-sup-inf-conditions in Thm 1}. Define
	\begin{equation}\label{Eq:(a)}
  \s(t)=\left\{
        \begin{array}{ll}
        M_1,&\quad t\in[r_0,R_1]\\
        M_n,& \quad t\in[r_n,R_n],\quad n\in\N\setminus\{1\},
        \end{array}\right.
  \end{equation}
and set
	$$
	A^*(t)=t^{\sigma(t)},\quad t\in[r_0,R_1]\cup\Bigl(\bigcup_{n\in\N\setminus\{1\}}[r_n,R_n]\Bigr).
	$$
Then $A^*$ is trivially nondecreasing and continuous on each $[r_n,R_n]$ and on $[r_0,R_1]$. Since $A$ is nondecreasing by the hypothesis, the definition of $d$ and the property (iv) imply
	$$
	A(t)\le A(r_n')=(r_n')^{\lambda+\frac\eta2}=(r_n^*)^{\lambda+\eta}\le t^{\lambda+\eta},\quad t\in [r_n^*, r_n'],\quad n\in\N,
	$$
and hence $d(t)\le\lambda+\eta$ for all $t\in [r_n^*,r_n']$ and $n\in\N$. This extends \eqref{Eq:estimate d(r_n^*)}. 

Let
	$$
	D(t)=\max \{ \max\{ d(r): t\le r\le r_n^*\}, \lambda+\eta\},\quad  t\in [R_n, r_n^*],\quad n\in\N\setminus\{1\}.
	$$
Then $D(t)$ is nonincreasing, and $D(r_n^*)=\max\{d(r_n^*),\lambda+\eta\}=\lambda+\eta$. Define
	$$
	A^*(t)=t^{D(t)},\quad t\in[R_n,r_n^*],\quad n\in\N\setminus\{1\}.
	$$
Then $A^*$ is continuous at each $R_n$, and nondecreasing on each $[R_n,r_n^*]$. Namely, $[R_n, r_n^*]=B\sqcup C$, where $B=\{t\in [R_n,r_n^*]:d(t)=D(t)\}$ is a closed set and $C$ is a union of open intervals $\{\Delta_{k,n}\}_{k}$. By the definition of $D$, we have $A^*(t)=e^{\log^+A(t)}$ on $B$, and $A^*(t)=t^{\lambda+\eta}$ on $C$. The monotonicity of $A^*$ on each $[R_n,r_n^*]$ now follows from that of $A$ and the definition of~$D$.

We next define $\sigma$ on $[R_n,r_n^*]$. In order to do so, denote $u_{0,n}=R_n$ and $u_{0,n}^*=\max\{t\in[R_n,r_n^*]:D(t)=D(R_n)\}$, and define
	$$
	t_{1,n}=\min\{ t\in \mathbb{N}: t\ge u_{0,n}+1, D(t)<D(u_{0,n})=D(R_n)\}.
	$$
Then obviously $t_{1,n}\le u_{0,n}^*+2$. We now define
	$$
	\sigma(t)=\sigma(u_{0,
 n})=M_n,\quad R_n=u_{0,n}\le t\le t_{1,n}.
	$$
Let
	$$
	u_{1,n}=\min\{ t> t_{1,n}:D(t)=y_{1,n}(t)\}
	$$
be the abscissa of the point in which the graphs $y=D(t)$ and
	$$
	y=y_{1,n}(t)=M_n-(\rho+1)\log\log t+(\rho+1)\log\log t_{1,n}
	$$
intersect. Note that $t^{y_{1,n}(t)}$ is decreasing on $[t_{1,n},r_n^*]$, because
	\begin{equation*}
	\begin{split}
	(y_{n,1}(t)\log t)'
	&=\frac 1t\left(-(\rho+1)+M_n-(\rho+1)(\log\log t-\log\log t_{1,n})\right)\\
	&<\frac 1t (-\rho-1+M_n)<0.
	\end{split}
	\end{equation*}
Since $D(t)\log t=\log A^*(t)$, and $A^*$ is nondecreasing and unbounded, the point $u_{1,n}$ exists and is unique. We set
  \begin{equation}\label{Eq:(d)}
	\sigma(t)= \sigma (t_{1,n})-(\rho+1) \log\log t+ (\rho+1) \log\log t_{1,n}, \quad t_{1,n} \le t\le u_{1,n}.
	\end{equation}
Let $u_{1,n}^*=\max \{t\in[u_{1,n},r_n^*]:D(t)=D(u_{1,n})\}$, and then choose
  $$
	t_{2,n}=\min\{t\in \mathbb{N}: t\ge u_{1,n}+1, D(t)<D(u_{1,n})\}.
	$$
Continue the process as above until the function $\s$ is defined on the whole interval $[R_n, r_n^*]$. Since $u_{k+1, n}\ge t_{k+1,n}\ge u_{k,n}+1$, the process finishes after a finite number of steps.

We will show next that
	\begin{equation}\label{e:a_astar_com}
	A^*(t)\le t^{\sigma(t)} \le (1+o(1))A^*(t), \quad t\in [R_n, r_n^*],\quad n\to\infty.
	\end{equation}
The left-hand inequality follows immediately from the construction. Moreover, $A^*(t)=t^{D(t)}=t^{\sigma(t)}$ for each $t\in[u_{k,n}, u_{k,n}^*]$. Note that $t_{k+1,n}\le u_{k,n}^*+2$ for each $k$ by the definition of $t_{k+1,n}$. Further, for $t\in[u_{k,n}^*,t_{k+1,n}]$ we have
	\begin{equation*}
	\begin{split}
  t^{\sigma(t)}
	&=t^{D(u_{k,n}^*)}
	\le (u_{k,n}^*+2)^{D(u_{k,n}^*)}
	=(u_{k,n}^*)^{D(u_{k,n}^*)}\left(1 +\frac 2 {u_{k,n}^*}\right)^{D(u_{k,n}^*)}\\
  &
	\le A^*(t)9^{\frac{D(u_{k,n}^*)}{u_{k,n}^*}}\le A^*(t)9^{\frac{\r+1}{R_n}}
	\le A^*(t)\left(1+\frac{9^{\r+1}}{R_n}\right),\quad n\in\N\setminus\{1\}.
	\end{split}
	\end{equation*}
Since $t^{\sigma(t)}$ is decreasing for $t\in [t_{k+1,n}, u_{k+1,n}]$ by its definition, we obtain
	$$
	t^{\sigma(t)}
	\le t_{k+1,n}^{\sigma(t_{k+1,n})}
	\le A^*(t_{k+1,n})\left(1+\frac{9^{\r+1}}{R_n}\right)
	\le A^*(t)\left(1+\frac{9^{\r+1}}{R_n}\right).
	$$
Therefore \eqref{e:a_astar_com} is proved.

It follows from \eqref{e:a_astar_com} that
	$$
	(r_n^*)^{\lambda+\eta}=A^*(r_n^*)\le (r_n^*)^{\sigma(r_n^*)} =(1+\delta_n) (r_n^*)^{\lambda+\eta},
	$$
where $0\le\delta_n\le\frac{9^{\r+1}}{R_n}$. Hence
	\begin{equation}\label{Eq:(b)}
	\lambda+\eta
	\le\sigma(r_n^*)
	\le\lambda+\eta +\frac{9^{\rho+1}}{R_n\log R_n}.
	\end{equation}
We then define
\begin{equation}\label{e:sigma_new}
  \sigma(t)=\sigma(r_n^*)+ C_n(\log\log t-\log\log r_n^*), \quad r_n^*\le t\le r_n',
\end{equation}
where
	$$
	C_n=\frac{M_{n+1}-\sigma(r_n^*)}{\log\frac{\lambda+\eta}{\lambda+\frac \eta2}}
	\le \frac{\rho+\varepsilon_1-\lambda-\eta}{\log\frac{\lambda+\eta}{\lambda+\frac \eta2}}.
	$$
Since $(\lambda+\eta)\log r_n^*=(\lambda+\frac \eta2)\log r_n'$ by (iv), this yields $\sigma(r_n')=M_{n+1}$.
We then define
	\begin{equation}\label{Eq:(c)}
	\sigma(t)=M_{n+1},\quad r_n'\le t\le r_{n+1},
	\end{equation}
and $A^*(t)=t^{\sigma(t)}$ for all $t\in [r_n^*, r_{n+1}]$. Then we
make the next step.

The statement (1) follows from the construction, see, in particular, \eqref{Eq:(a)}, the definition of $M_n$ and the hypothesis \eqref{Eq:lim-sup-inf-conditions in Thm 1}. The statement (2) is a consequence of the construction and \eqref{Eq:(b)}.
Statement (3) is obvious on each $[r_n,R_n]$ by the definition of $M_n$. On $[R_n,r_n^*]$ we have
	$$
	A(t)\le t^{d(t)}\le t^{D(t)}=A^*(t)\le t^{\s(t)}
	$$
by \eqref{e:a_astar_com}. On the remaining interval $[r_n^*,r_{n+1}]$ we have (3) by the construction, see \eqref{e:sigma_new} and \eqref{Eq:(c)}, and the hypothesis on the monotonicity of $A$.

It remains to show that $\s$ is a proximate order and $A^*$ satisfies the properties of an associated function. We have already seen that $\s$ satisfies the property (2) and also (3), with $\lambda+\eta$ in place of $\lambda$. Moreover, (5) is satisfied by \eqref{e:a_astar_com} because $A^*(t)=t^{\s(t)}$ for all $t\in[r_0,\infty)\setminus\bigcup_{n\in\N\setminus\{1\}}[R_n,r_n^*]$. Furthermore, by \eqref{Eq:(d)} and \eqref{Eq:(c)}, we also have $|\sigma'(t)|\le\frac{K}{t\log t}$, where $K=\max\{\sup_n C_n, \rho+1\}$, except for countably many points, where the derivative does not exist. We can slightly modify the continuous function $\sigma$ in small neighborhoods of these points without changing the other properties to get (1) and (4). The associated function $A^*$ is continuous and increasing by the construction.
Theorem~\ref{t:valir} is now proved.
\end{proof}

The following example shows that in the statement of Theorem~\ref{t:valir} the parameter $\eta$ must be strictly positive. Observe also that no generalized proximate order satisfying the limit conditions (2) and (3) of the definition exists for the constructed $A$.

\begin{example}
Let $0<\lambda<\rho<\infty$, $r_1=2$ and $r_{n+1}=r_n ^{\frac{\rho}{\lambda}}$ for all $n\in\mathbb{N}$. Define
	$$
	A(r)=r_n^\rho,\quad r_n<r\le r_{n+1},\quad n\in\mathbb{N}.
	$$
	Then
	$$
	A(r_n^+)=r_n^\rho,\quad A(r_{n+1})=r_n^\r=r_{n+1}^\lambda, \quad n\in\N,
	$$
	and
	\begin{equation}\label{Eq:example}
	r^\lambda\le A(r)<r^\rho,\quad r\ge r_1=2.
	\end{equation}
	By redefining the step function $A$ on small intervals $[r_{n+1}-\e_n,r_{n+1}]\subset(r_n,r_{n+1}]$ such that its graph coincides with the line segment joining the points $(r_{n+1}-\e_n,r_n^\r)$ and $(r_{n+1},r_{n+1}^\r)$, it remains non-decreasing, becomes continuous and satisfies
	\begin{equation}\label{Eq:example-modified}
	r^\lambda< A(r)\le r^\rho,\quad r\ge r_1=2,
	\end{equation}
	instead of \eqref{Eq:example}, and $A(r_n)=r_n^\rho\le A(r)\le r_{n+1}^\rho$ for all $r\in[r_n,r_{n+1}]$ and $n\in\N$.
	
	Suppose that $\sigma\colon [2,\infty)\to (0,\infty)$ is a quasi proximate order of $A$ such that
	$$
	\limsup_{r\to \infty} \sigma(r)=\rho,  \quad  \liminf_{r\to \infty} \sigma(r)=\lambda.
	$$
	Then there exist infinite sequences $(n_k)_{k\in\N}$ and $(r_{k}')_{k\in\N}$, and a decreasing function $\eta\colon[2,\infty)\to (0,1)$ such that $\lim_{r\to\infty}\eta(r)=0$, $r_{k}'\in(r_{n_k},r_{n_k+1})$ for all $k\in\N$, and
	\begin{equation}\label{Ex:2}
	\left(r_k'\right)^{\sigma(r_k')}\le\left(r_k'\right)^{\lambda+\eta(r_k')},\quad k\in\N.
	\end{equation}
	Denote $$C_k=\sup_{[r_{k}', r_{n_k+1}]}|\sigma'(r)|r\log r$$ for all $k\in\N$. By the property (3) of Theorem~\ref{t:valir} we deduce $r_{n}^\r=A(r_{n})\le r_{n}^{\sigma(r_{n})}$, and thus $\rho\le\sigma(r_n)$ for all $n\in\N$. By (3) we also deduce $r_{n_k}^\r\le A(r_k')\le(r_k')^{\s(r_k')}$ for all $k\in\N$. These observations together with \eqref{Ex:2} and the definition of the sequence $(r_n)_{n\in\N}$ yield
	\begin{equation*}
	\begin{split}
	\rho-(\lambda+\eta(r_{k}'))
	&\le \sigma(r_{n_k+1})-\sigma (r_k')
	\le \int_{r_k'}^{r_{n_k+1}} \frac{C_k}{r\log r} \, dr\\
	&
	=C_k \log \frac{ \log r^\lambda _{n_k+1}}{\lambda \log r_k'}
	=C_k \log \frac{ \log r^\r _{n_k}}{\lambda \log r_k'}\\
	&\le C_k \log \frac{ \log (r' _{k})^{\lambda+\eta(r_k')}}{\lambda \log r_k'}=C_k \log \frac{\lambda+\eta(r_k')}{\lambda}.
	\end{split}
	\end{equation*}
	It follows that
	$$
	C_k\ge\frac{\rho-\lambda-\eta(r_k')}{\log \frac{\lambda+\eta(r_k')}{\lambda}},\quad k\in\N,
	$$
	and, consequently, $\limsup_{r\to\infty}|\sigma'(r)|r\log r=\infty$. 
\end{example}

\section{Estimates for the number of zeros in polar rectangles}\label{sec:polar-rectangle}

Let $\{a_k\}$ denote the sequence of zeros of $f$, listed according to their multiplicities and ordered by the increasing moduli. Denote
	$$
	n_1(r,f)=\max_\varphi \# \left\{a_k:r\le\left|a_k\right|\le\frac{1+r}{2},\,\left|\arg a_k-\varphi\right|\le\frac{\pi}{4}(1-r)\right\},\quad 0\le r<1.
	$$
The aim of this section is to establish the following sharp estimate for $n_1(r,f)$ in terms of a quasi proximate order of $f$. 

\begin{proposition}\label{p:i_est}
Let $f$ be an analytic function in $\D$ such that $0\le \lambda_M(f)<\sigma_M(f)<\infty$. Let $\lambda$ be a quasi proximate order of $f$, related to $\eta\in(0,\sigma_M(f)-\lambda_M(f))$, and $\varepsilon>0$. Then
		\begin{equation}\label{e:n_est0}
		n_1(R, f) \lesssim \frac{1}{(1-R)^{1+\lambda(R)\left(1-\frac{1}{\sigma_M(f)}\right)^+ +\varepsilon}},\quad R\to1^-.
		\end{equation}
In particular,
	\begin{equation}\label{e:n_est}
  n_1(R,f)\lesssim \frac{1}{(1-R)^{1+\left(\lambda_M(f)+\eta\right)\left(1-\frac{1}{\sigma_M(f)}\right)^++\varepsilon}},\quad R\in E,
  \end{equation}
where $E\subset[0,1)$ satisfies $\DU(E)=1$.
\end{proposition}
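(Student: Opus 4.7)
The approach is a Jensen's formula argument on small Euclidean discs covering the polar rectangle, combined with a convexity/interpolation argument to extract the factor $1-1/\sigma_M(f)$ in the exponent.

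Fix $R$ near $1$ and $\varphi_0\in[0,2\pi)$, and denote the polar rectangle by $P=\{re^{i\theta}:R\le r\le (1+R)/2,\,|\theta-\varphi_0|\le \pi(1-R)/4\}$. I would cover $P$ by a uniformly bounded number $N$ of Euclidean discs $D_j=D(z_j,\rho_1)$, $j=1,\ldots,N$, with $z_j=r_0 e^{i\varphi_j}$, $1-r_0\asymp 1-R$ and $\rho_1=c_1(1-R)$ for some small fixed $c_1>0$, chosen so that the companion discs $D_j^{*}=D(z_j,\rho_2)$, $\rho_2=2\rho_1$, remain contained in $\D$. Jensen's formula on $D_j^{*}$ gives
\[
n(z_j,\rho_1,f)\log 2\le \frac{1}{2\pi}\int_0^{2\pi}\log|f(z_j+\rho_2 e^{i\vartheta})|\,d\vartheta-\log|f(z_j)|,
\]
and summing over $j$ yields $n_1(R,f)\lesssim \sum_{j}\bigl(\mathrm{mean}_{\partial D_j^{*}}(\log^+|f|)-\log|f(z_j)|\bigr)$. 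By property (7) of the quasi proximate order together with the Lipschitz-type consequence of (4), which gives $\lambda(r)=\lambda(R)+o(1)$ whenever $1-r\asymp 1-R$, each such mean is $\lesssim (1-R)^{-\lambda(R)}$.

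The main obstacle is to control $-\log|f(z_j)|$ for at least one center $z_j$ in the covering of each polar rectangle. The rough pigeonhole based on $\int_0^{2\pi}\log^-|f(se^{i\vartheta})|\,d\vartheta\le \int_0^{2\pi}\log^+|f(se^{i\vartheta})|\,d\vartheta-2\pi\log|f(0)|\lesssim (1-s)^{-\lambda(s)}$ only produces the weaker bound $-\log|f(z_j)|\lesssim (1-R)^{-\lambda(R)-1}$, yielding exponent $1+\lambda(R)$. To extract the sharper exponent, I would interpret $1+\lambda(R)(1-1/\sigma_M(f))^+$ as the convex combination $1\cdot(1-\alpha)+\sigma_M(f)\cdot\alpha$ with $\alpha=\lambda(R)/\sigma_M(f)$ interpolating between the Blaschke-type exponent~$1$ and the maximal-density exponent~$\sigma_M(f)$, and realize it by interpolating the $L^1$ bound $\int\log^-|f|\lesssim (1-r)^{-\lambda(r)}$ (coming from the quasi proximate order) against the $L^p$ bound $\int(\log^-|f|)^p\lesssim (1-r)^{-p\sigma_M(f)-\varepsilon}$ (which follows from $\rho_p(f)\le \sigma_M(f)$, i.e.\ from \eqref{e:linden_ineq}) via a Chebyshev-type inequality, then optimizing in $p$. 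The precise implementation of this $L^p$-interpolation is the main technical challenge.

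Finally, \eqref{e:n_est} follows from \eqref{e:n_est0} once one checks that the set $E=\{r\in[0,1):\lambda(r)\le\lambda_M(f)+\eta+\varepsilon'\}$ has upper density one. Indeed, $\liminf_{r\to1^-}\lambda(r)=\lambda_M(f)+\eta$ combined with the integrated Lipschitz bound
\[
|\lambda(s)-\lambda(r)|\lesssim \frac{1}{\log\frac{1}{1-r}},\qquad s\in[r,r+\delta(1-r)],
\]
coming from property (4), implies that $E$ contains intervals of relative length tending to~$1$ around every sequence $r_n\to 1^-$ with $\lambda(r_n)\to\lambda_M(f)+\eta$; hence $\DU(E)=1$.
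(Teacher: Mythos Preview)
Your plan has a genuine gap at precisely the step you yourself flag as ``the main technical challenge'': controlling $-\log|f(z_j)|$. The $L^p$-interpolation you sketch cannot produce the target exponent. Pigeonholing the global bound $m_q(r,\log|f|)\lesssim(1-r)^{-\rho_q(f)-\varepsilon}\le(1-r)^{-\sigma_M(f)-\varepsilon}$ over an arc of length $\asymp 1-R$ yields a center with $\log^-|f(z_j)|\lesssim(1-R)^{-\sigma_M(f)-1/q-\varepsilon}$; combining with the $L^1$ pigeonhole and optimising in $q$ gives, at best, an exponent that is a convex combination of $\lambda(R)+1$ and $\sigma_M(f)$. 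But the target $1+\lambda(R)\bigl(1-1/\sigma_M(f)\bigr)$ is \emph{strictly smaller} than both $\lambda(R)+1$ and $\sigma_M(f)$ whenever $0<\lambda(R)<\sigma_M(f)$, so no convex combination of those two endpoints can reach it. Jensen on small discs, with centers chosen by any pigeonhole on a single circle, therefore cannot beat exponent $\min\bigl(\lambda(R)+1,\,\sigma_M(f)\bigr)$.

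The paper avoids $\log^-|f|$ altogether. It quotes a refined zero-counting estimate of Linden (stated here as Theorem~\ref{TheoremLinden2}) that bounds $n(\zeta,h,f)$, for $h\asymp 1-|\zeta|$, directly by a weighted \emph{radial} integral of the maximum modulus,
\[
n(\zeta,h,f)\lesssim\frac{1}{(R-r)^{1/\alpha}}\int_0^R\log^+M(t,f)\,(R-t)^{1/\alpha-1}\,dt,\qquad |\zeta|=r<R,
\]
with a free parameter $\alpha\in[\tfrac12,1)$. The sharp exponent then falls out of this integral (Lemma~\ref{l:i_est}) by splitting it at the radius $R^*$ determined by $(1-R^*)^{\sigma_M(f)}=(1-R)^{\lambda(R)}$: on $[0,R^*]$ one uses the crude order bound $\log^+M(t,f)\lesssim(1-t)^{-\sigma_M(f)-\varepsilon/2}$; on $[R^*,R]$ one uses $\log^+M(t,f)\lesssim A^*(t)\le A^*(R)\asymp(1-R)^{-\lambda(R)}$ via the monotone associated function of the quasi proximate order. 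Both pieces contribute the exponent $1+\lambda(R)\bigl(1-1/\sigma_M(f)\bigr)+\varepsilon$ once $1/\alpha$ is chosen close to~$1$. So the ``interpolation'' that extracts the factor $1-1/\sigma_M(f)$ is radial---between the global order far from $R$ and the quasi proximate order near $R$---and is encoded in the choice of the splitting point $R^*$, not in an $L^p$ argument on circles.

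Your density argument for $E$ via property~(4) is essentially correct and matches the paper's.
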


The first step towards Proposition~\ref{p:i_est} is to apply \cite[Theorem~2]{Linden1956}. Namely, if $n(\zeta,h,f)$ denotes the number of zeros of an analytic function $f$ in the closed disc $\overline{D}(\zeta,h)=\{w:|\zeta-w|\le h\}$, then by replacing $z$ by $Rz$ in the said theorem we obtain the following result.

\begin{lettertheorem}\label{TheoremLinden2}
Let $f$ be an analytic function in $\D$ such that $f(0)=1$. Then, for $\a\in[1/2,1)$ and $\widetilde\eta\in(0,1/6)$ there exist constants $R_0=R_0(\a)\in(0,1)$ and $C=C(\a,\widetilde\eta)$ such that
    \begin{equation}\label{e:n}
    n(\zeta,h,f)\le\frac{C}{(R-r)^\frac1\a}\left(\int_0^R\log^+M(t,f)(R-t)^{\frac1\a-1}\,dt+\log^+M(R_0,f)\right),
    \end{equation}
where $|\zeta|=r<R<1$ and $h=\widetilde\eta(R-r)/R$.
\end{lettertheorem}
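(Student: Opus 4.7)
The plan is to apply Theorem~\ref{TheoremLinden2} to a finite collection of Euclidean disks that cover the polar rectangle. Since the rectangle has dimensions comparable to $\frac{1-R}{2}\times\frac{\pi R(1-R)}{2}$, it can be covered by $O(1)$ disks $\overline D(\zeta_j, h_j)$ whose number depends only on $\widetilde\eta$, with centers satisfying $|\zeta_j|\in[R,(1+R)/2]$ and radii $h_j\asymp \widetilde\eta(1-R)$. The constraint $h_j=\widetilde\eta(R_1-|\zeta_j|)/R_1$ of Theorem~\ref{TheoremLinden2} is then met with a single $R_1<1$ satisfying $1-R_1 \asymp 1-R$, and each disk contributes a prefactor $(R_1-|\zeta_j|)^{-1/\alpha}\asymp (1-R)^{-1/\alpha}$ to the bound.

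The heart of the proof is the estimation of
$$
I = \int_0^{R_1} \log^+ M(t, f)\,(R_1 - t)^{1/\alpha - 1}\,dt.
$$
Two natural upper bounds on $\log M(t,f)$ come from the quasi proximate order: properties~(2) and~(7) give $\log M(t, f) \le (1-t)^{-\sigma_M(f) - \varepsilon}$ for $t$ close enough to $1$, while property~(5) together with the monotonicity~(6) gives $\log M(t, f) \le A^*(t)\le A^*(R) \asymp (1-R)^{-\lambda(R)}$ for $t \le R$. These two estimates agree at the balance point
$$
t_* = 1 - (1-R)^{\lambda(R)/\sigma_M(f)}.
$$
I would split $I$ at $t_*$, keeping the smaller of the two bounds on each of $[0,t_*]$ and $[t_*,R]$, and use the slow variation~(4), which yields $\lambda(t)=\lambda(R)(1+o(1))$ on $[R,R_1]$, to bound $\log M(t, f) \le (1-t)^{-\lambda(R)(1+o(1))}$ on that last subinterval.

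Direct computation then gives $\int_0^{t_*}\cdots \asymp (1-t_*)^{1 - \sigma_M(f) - \varepsilon}\asymp (1-R)^{-\lambda(R)(1 - 1/\sigma_M(f))^+ - O(\varepsilon)}$ (with a bounded contribution if $\sigma_M(f)\le 1$), $\int_{t_*}^R \cdots \asymp A^*(R)(R-t_*)\asymp (1-R)^{-\lambda(R)(1 - 1/\sigma_M(f))^+}$, and $\int_R^{R_1}\cdots \asymp (1-R)^{1/\alpha - \lambda(R)}$; the third piece is dominated by the first two whenever $\lambda(R)<\sigma_M(f)$ and $\alpha$ is chosen close to $1$. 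Multiplying by the prefactor and letting $\alpha\to 1^-$ yields~\eqref{e:n_est0}.

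To deduce~\eqref{e:n_est}, the idea is to apply~\eqref{e:n_est0} on a set where $\lambda(R)$ is close to its liminf. Property~(3) supplies a sequence $r_n\to 1^-$ with $\lambda(r_n)\le\lambda_M(f)+\eta+\varepsilon/2$; the slow variation~(4) then forces $\lambda(r)\le\lambda_M(f)+\eta+\varepsilon$ on intervals around each $r_n$ whose relative width tends to infinity, and a standard argument shows that the resulting set $E_\varepsilon=\{r:\lambda(r)\le \lambda_M(f)+\eta+\varepsilon\}$ has upper density one. The main obstacle I anticipate is the careful balancing of the three integral pieces and the verification that the $o(1)$ factors arising from slow variation combine into a single $\varepsilon$ in the final exponent.
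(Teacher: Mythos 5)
Your proposal does not prove the statement at hand. The statement is Theorem~\ref{TheoremLinden2} itself: the pointwise bound \eqref{e:n} on the local zero count $n(\zeta,h,f)$ by a fractional integral of $\log^+M(t,f)$, valid for an arbitrary analytic $f$ in $\D$ with $f(0)=1$ (no growth or proximate-order hypotheses). Your very first sentence applies Theorem~\ref{TheoremLinden2}, so as a proof of that theorem the argument is circular; what you then sketch --- covering the polar rectangle by $O(1)$ disks, splitting the integral $I$ at the balance point $t_*=1-(1-R)^{\lambda(R)/\sigma_M(f)}$, using properties (2), (4)--(7) of a quasi proximate order, and extracting a set $E$ of upper density one --- is a proof outline for Proposition~\ref{p:i_est} and Lemma~\ref{l:i_est}, i.e. the estimates \eqref{e:n_est0} and \eqref{e:n_est}, not for Theorem~\ref{TheoremLinden2}. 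Indeed, the quasi proximate order and the quantities $\lambda(R)$, $\sigma_M(f)$, $A^*$ do not even appear in the statement you were asked to prove.

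In the paper, Theorem~\ref{TheoremLinden2} is not proved but quoted: it is \cite[Theorem~2]{Linden1956} after the substitution $z\mapsto Rz$. An actual proof would require a different set of tools --- essentially Jensen's formula on a disc slightly larger than $\overline D(\zeta,h)$, an upper bound of $\log|f|$ by $\log^+M$, and a lower bound for $\log|f|$ at a suitable point obtained from Linden's minimum-modulus estimates (this is where the exponent $\frac1\alpha$ and the restriction $\widetilde\eta<1/6$ enter) --- none of which is present in your proposal. So there is a genuine gap: no argument is offered for the inequality \eqref{e:n} itself. The material you did write would be more appropriately judged against the proofs of Lemma~\ref{l:i_est} and Proposition~\ref{p:i_est}, where it is broadly in the spirit of the paper's splitting of $J(R)$ at the point $R^*$ defined by $(1-R^*)^{\sigma_M(f)}=(1-R)^{\lambda(R)}$ and of the density-one argument via the sequences $(R_n)$, $(R_n^*)$.
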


In view of this theorem the proof of Proposition~\ref{p:i_est} boils down to estimating the integral appearing on the right-hand side of \eqref{e:n}. This is done in the following lemma.

\begin{lemma}\label{l:i_est}
Let $f$ be an analytic function in $\D$ such that $0\le \lambda_M(f)<\sigma_M(f)<\infty $.
For $1/2\le\alpha<1$ and $0\le R_0<1$, define $I_\alpha:[R_0,1)\to[0,\infty)$ by
  \begin{equation}\label{e:ir_def}
  I_\alpha(R)=\frac{1}{(1-R)^{\frac1\a}}\left(\int_0^R\log^+M(t,f)(R-t)^{\frac1\a-1}\,dt+\log^+M(R_0,f)\right).
  \end{equation}
Let $\lambda$ be a quasi proximate order of $f$ related to $\eta\in(0,\sigma_M(f)-\lambda_M(f))$. Then
	\begin{equation}\label{e:ir_est0}
  I_\alpha(R)\lesssim\frac{1}{(1-R)^{1+\lambda(R)\left(1-\frac{1}{\sigma_M(f)}\right)^+ +\varepsilon}},\quad R\to1^-,
	\end{equation}
provided one of the following two conditions is satisfied:
	\begin{itemize}
	\item[\rm(i)] $0\le\sigma_M(f)<1$ and $\e>0$;
	\item[\rm(ii)] $1\le\sigma_M(f)<\infty$ and $1/\a<1+\frac{\varepsilon}2$.
	\end{itemize}
%
In particular,
	\begin{equation}\label{e:ir_est}
  I_\alpha(R)\lesssim\frac{1}{(1-R)^{1+(\lambda_M(f)+\eta)\left(1-\frac{1}{\sigma_M(f)}\right)^+ +\varepsilon}},\quad R\in E,
  \end{equation}
where $E=E_{\varepsilon,\eta}\subset[0,1)$ satisfies $\DU(E)=1$.
\end{lemma}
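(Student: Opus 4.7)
The plan is to use property (7) of the quasi proximate order, $\log M(t,f)\le (1-t)^{-\lambda(t)}$, to replace $\log^+ M(t,f)$ in the integrand by the explicit majorant $(1-t)^{-\lambda(t)}$ for $t$ close to $1$, while the bounded contributions from $t$ away from $1$ (together with the $\log^+ M(R_0,f)$ term) produce at most an $O((1-R)^{-1/\alpha})$ summand in $I_\alpha(R)$. Under the hypothesis on $\alpha$ in each of the cases (i) and (ii), this summand is absorbed into the target bound \eqref{e:ir_est0}.

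The heart of the proof is then the estimation of $\int_{r_0}^R (1-t)^{-\lambda(t)}(R-t)^{1/\alpha-1}\,dt$. I would split the integral at $r_1=1-(1-R)^\beta$ for a constant $\beta\in(0,1)$ close to $1$. Property (4) together with the antiderivative $\log\log(1/(1-s))$ of $1/((1-s)\log(1/(1-s)))$ implies $|\lambda(t)-\lambda(R)|\le C\log(1/\beta)$ uniformly on $[r_1,R]$; choosing $\beta$ with $C\log(1/\beta)\le\varepsilon$ yields $\lambda(t)\le\lambda(R)+\varepsilon$ on the inner range, while on $[r_0,r_1]$ the cruder bound $\lambda(t)\le\sigma_M(f)+\varepsilon$ (available for $t$ large since $\limsup\lambda=\sigma_M(f)$) is used.

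On the inner range, the substitution $u=(1-t)/(1-R)$ converts the integral to $(1-R)^{1/\alpha-\lambda(R)-\varepsilon}\int_1^{(1-R)^{\beta-1}}(u-1)^{1/\alpha-1}u^{-\lambda(R)-\varepsilon}\,du$. One then treats separately the subcases $\lambda(R)+\varepsilon>1/\alpha$ (where the $u$-integral is convergent at infinity) and $\lambda(R)+\varepsilon\le 1/\alpha$ (where it grows and contributes an additional factor $(1-R)^{(\beta-1)(1/\alpha-\lambda(R)-\varepsilon)}$). The outer piece on $[r_0,r_1]$ is bounded by the same substitution, yielding $O(1)$ in case (i) (because $(1-t)^{-\sigma_M(f)-\varepsilon}$ is integrable on $[0,1)$ when $\sigma_M(f)<1$) and $O((1-R)^{\beta(1-\sigma_M(f)-\varepsilon)})$ in case (ii). Multiplying the sum by $(1-R)^{-1/\alpha}$, one verifies that the resulting exponent is at most $-(1+\lambda(R)(1-1/\sigma_M(f))^++\varepsilon)$, thus establishing \eqref{e:ir_est0}.

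For \eqref{e:ir_est} the plan is to take $E=\{R\in[r_0,1):\lambda(R)\le\lambda_M(f)+\eta+\varepsilon'\}$ with $\varepsilon'$ chosen so that plugging $\lambda(R)\le\lambda_M(f)+\eta+\varepsilon'$ into \eqref{e:ir_est0} gives the exponent in \eqref{e:ir_est} up to slack $\varepsilon$. To show $\overline{D}(E)=1$, pick $r_n\uparrow 1$ with $\lambda(r_n)\to\lambda_M(f)+\eta$; property (4) and the antiderivative estimate above yield $\lambda(r)-\lambda(r_n)\le\varepsilon'/2$ for $r\in[r_n,1-(1-r_n)^\mu]$ with $\mu=\mu(\varepsilon')>1$, so $E$ contains an interval of length $(1-r_n)(1-(1-r_n)^{\mu-1})$ to the right of $r_n$, whence $m(E\cap[r_n,1))/(1-r_n)\to 1$. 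The main obstacle is the bookkeeping of exponents in case (ii) when $\lambda(R)$ is small relative to $\sigma_M(f)$: the divergent subcase on the inner range must be balanced with the outer contribution through a careful choice of $\beta$, and one must check that the combined exponent never exceeds $-(1+\lambda(R)(1-1/\sigma_M(f))^+ +\varepsilon)$.
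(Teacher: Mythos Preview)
Your approach to case (i) and to the ``in particular'' claim is sound and close in spirit to the paper's. The genuine gap is in case (ii) when $\sigma_M(f)>1$ and $\lambda(R)$ is small relative to $\sigma_M(f)$.

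Your property-(4) argument on the inner range forces $\beta$ to be a fixed constant close to $1$ (you need $C\log(1/\beta)\le\varepsilon$). With such $\beta$, the outer-range contribution, after multiplication by $(1-R)^{-1/\alpha}$, is of order $(1-R)^{-\sigma_M(f)-O(\varepsilon)}$. But when $\lambda(R)$ is near $\lambda_M(f)+\eta$ (which may be arbitrarily small, since the lemma imposes no lower bound on $\lambda_M(f)$ in case (ii)), the target exponent $1+\lambda(R)(1-1/\sigma_M(f))+\varepsilon$ is close to $1+\varepsilon$. For $\sigma_M(f)>1$ the outer term therefore dominates the target and the bound fails. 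You acknowledge that balancing is needed, but no fixed $\beta$ can do it: a smaller $\beta$ repairs the outer range but destroys your inner-range control of $\lambda(t)$ via property (4).

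The paper's remedy is to abandon property (4) on the inner range and use instead property (5), the associated function $A^*$. Since $A^*$ is nondecreasing, $(1-t)^{-\lambda(t)}\le(1+o(1))A^*(t)\le(1+o(1))A^*(R)$ for all $t\le R$, giving a uniform bound on the inner integrand regardless of the width of the inner range. This frees the split point: the paper takes $R^*$ with $(1-R^*)^{\sigma_M(f)}=(1-R)^{\lambda(R)}$, i.e.\ effectively $\beta=\lambda(R)/\sigma_M(f)$ varying with $R$, which is precisely what makes the outer contribution $(1-R)^{-1/\alpha}(1-R^*)^{1/\alpha-\sigma_M(f)-\varepsilon/2}$ match the target. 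The monotone majorant $A^*$ is the missing ingredient in your argument.
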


\begin{proof}
By Theorem~\ref{t:valir} there exists a quasi proximate order $\lambda$ of $f$, related to $\eta\in(0,\sigma_M(f)-\lambda_M(f))$, satisfying the properties (1)--(7).
In view of (7) the problem boils down to estimating the quantity
		\begin{equation}\label{e:j_def}
		J(R)=\frac{1}{(1-R)^{\frac1\a}}\left(\int_0^R \frac{(R-t)^{\frac1\a-1}}{(1-t)^{\lambda(t)}}\,dt+1\right).
		\end{equation}
		
(i) Assume that $0\le\sigma_M(f)<1$ and, without loss of generality, pick up $\e>0$ such that $\sigma_M(f)+\varepsilon<1/\a$. Fix $R_0\in(0,1)$ such that $\lambda(r)\le \sigma_M(f)+\varepsilon$ for all $r\in[R_0,1)$, and let $R\in(R_0,1)$. Then
    \begin{equation}\label{pervo1}
    J(R)\lesssim\frac 1{(1-R)^{\frac1\a}} \int_0^R \frac{dt}{(1-t)^{\sigma_M(f)+\varepsilon+1-\frac 1\alpha}}
      \lesssim \frac 1{(1-R)^{\frac1\a}} \lesssim \frac 1{(1-R)^{1+\varepsilon}},
    \end{equation}
and thus \eqref{e:ir_est0} is proved.

(ii) Assume next that $1\le\sigma_M(f)<\infty$ and $\varepsilon<1/\a<1+\frac{\varepsilon}2$. Fix $R_0\in(0,1)$ such that $\lambda(r)\le \sigma_M(f)+\frac\varepsilon2$ for all $r\in[R_0,1)$, and let $R\in(R_0,1)$. If $\lambda(R)\ge\sigma_M(f)$ the required estimate follows from standard arguments applied directly to \eqref{e:ir_def}. Assume that $\lambda(R)<\sigma_M(f)$, and define $R^*\in(0,R)$ by the condition $(1-R^*)^{\sigma_M(f)}=(1-R)^{\lambda(R)}$. Then (5), (6), the inequality $1/\a<1+\frac{\varepsilon}2$, and the definition of $R^*$ yield
		\begin{equation*}
  	\begin{split}
		J(R)&=(1-R)^{-\frac1{\alpha}} \left(\int_0^{R^*}+\int_{R^*}^R\right)\frac{(R-t)^{\frac1\a-1}}{(1-t)^{\lambda(t)}}\,dt\\
		&\lesssim(1-R)^{-\frac1{\alpha}} \int_0^{R^*}\frac{(R-t)^{\frac1\alpha-1}}{(1-t)^{\sigma_M(f)+\varepsilon/2}}\,dt
		+(1-R)^{-\frac1{\alpha}} \int_{R^*}^R A^*(t) (R-t)^{\frac1\alpha-1}\,dt\\
		&\le(1-R)^{-\frac1{\alpha}} \int_0^{R^*} \frac1{(1-t)^{\sigma_M(f)+\varepsilon/2-\frac 1\alpha +1}}\, dt+ \alpha (1-R)^{-\frac1{\alpha}} {A^*(R) (R-R^*)^{\frac1\alpha}}\\
		&\lesssim \frac{{(1-R)^{-\frac1{\alpha}}}}{(1-R^*)^{\sigma_M(f)+\varepsilon/2 -\frac 1\alpha}}
		+\frac{(1-R^*)^{\frac1\alpha}}{(1-R)^{\lambda(R)+\frac 1\alpha }}\\
		&=\frac1{(1-R)^{\frac1{\alpha}+\frac{\lambda(R)}{\sigma_M(f)}({\sigma_M(f)+\varepsilon/2 -\frac 1\alpha})   }}
		+\frac{1}{(1-R)^{\lambda(R)+\frac 1	\alpha -\frac{\lambda(R)}{\sigma\alpha} }}\\
		&\lesssim\frac1{(1-R)^{\frac1{\alpha}+\lambda(R)+ \varepsilon/2 -\frac{\lambda(R)}{\sigma_M(f)\alpha}}}
		\lesssim\frac1{(1-R)^{1+\lambda(R)-\frac{\lambda(R)}{\sigma_M(f)}+\varepsilon}}.
		\end{split}
		\end{equation*}
Therefore \eqref{e:ir_est0} holds in this case also.

It remains to prove \eqref{e:ir_est}. In the case $\sigma_M(f)\le1$ this follows with $E=[R_0,1)$ from \eqref{pervo1}. So assume that $\sigma_M(f)>1$. Pick up an increasing sequence $(R_n)_{n\in\N}$ such that $\lambda(R_n)=\lambda_M(f)+\eta+\frac\varepsilon2$ for all $n\in\N$ and $\lim_{n\to\infty}R_n=1$. Then define $(R^*_n)$ by $(1-R^*_n)^{\lambda_M(f)+\eta+2\varepsilon}=(1-R_n)^{\lambda_M(f)+\eta +\frac {3\varepsilon}2}$ for all $n\in\N$, and set $E=\bigcup_{n=1}^\infty[R_n^*,R_n]$. The properties (5) and (6) imply
	$$
	\frac1{(1-R)^{\lambda(R)}}\lesssim A^*(R)\le A^*(R_n)\le\frac1{(1-R_n)^{\lambda(R_n)}},\quad R\in [R_n^*,R_n],\quad n\in\N,
	$$
and hence \eqref{e:ir_est0}, the definitions of $(R_n)$ and $(R^*_n)$ yield
	\begin{equation*}
	\begin{split}
  J(R)&\lesssim\frac1{(1-R)^{1+\lambda(R)\left(1-\frac{1}{\sigma_M(f)}\right)+\varepsilon}}
	=\frac1{(1-R)^{1+\frac \varepsilon{\sigma_M(f)}}}\cdot\frac1{(1-R)^{(\lambda(R)+\varepsilon)\left(1-\frac{1}{\sigma_M(f)}\right)}}\\
  &\lesssim\frac1{(1-R)^{1+\frac\varepsilon{\sigma_M(f)}}}
	\cdot\frac1{(1-R_n)^{(\lambda(R_n)+\varepsilon)\left(1-\frac{1}{\sigma_M(f)}\right)}}\\
	&=\frac1{(1-R)^{1+\frac\varepsilon{\sigma_M(f)}}}\cdot\frac1{(1-R_n)^{(\lambda_M(f)+\eta+\frac{3\varepsilon}2)\left(1-\frac{1}{\sigma_M(f)}\right)}}\\
	&=\frac1{(1-R)^{1+\frac\varepsilon{\sigma_M(f)}}}\cdot\frac1{(1-R_n^*)^{(\lambda_M(f)+\eta+2\varepsilon)\left(1-\frac{1}{\sigma_M(f)}\right)}}\\
	&\le
\frac1{(1-R)^{1+ (\lambda_M(f)+\eta)\left(1-\frac{1}{\sigma_M(f)}\right)+2\varepsilon-\frac{\varepsilon}{\sigma_M(f)}}},\quad R\in [R_n^*,R_n],\quad n\in\N.
	\end{split}
	\end{equation*}
By re-defining $\e$, this yields \eqref{e:ir_est} on $E$. Finally, since $R_n^*\to1^-$ as $n\to\infty$, we have
	$$
	1\ge\DU(E)\ge\lim_{n\to\infty}\frac{R_n-R_n^*}{1-R_n^*}=1-\lim_{n\to\infty}(1-R_n^*)^{\frac{\varepsilon}{2(\lambda+\eta+\frac{3\varepsilon}{2})}}=1.
	$$
This finishes the proof of the lemma.
\end{proof}

With these preparations we can prove the proposition we are after in this section.

\begin{proof}[Proof of Proposition~\ref{p:i_est}] For simplicity, we assume that $f(0)=1$. The general case is an easy modification. 
Let $\varepsilon>0$ be given, and fix $\alpha\in[1/2,1)$ and $\eta\in(0,\sigma_M(f)-\lambda_M(f))$ as in Lemma~\ref{l:i_est}. Further, let $(R_n)_{n\in\N}$ be the sequence appearing in the proof of the said lemma. Let $r_\nu=1-2^{-\nu}$ for all $\nu\in\N$, and let $\nu$ be sufficiently large such that $r_\nu\ge R_0$, where $R_0=R_0(\alpha)\in(0,1)$ is the constant from Theorem~\ref{TheoremLinden2}.
Let $r\in[r_\nu,r_{\nu+1})$ and $R=\frac{2r}{1+r}\in(r_{\nu},r_{\nu+2})$. Then the estimate \eqref{e:n} yields
    \begin{equation}\label{e:nglob}
    n(\zeta,\tilde\eta(1-r)/2,f)
		\lesssim I_\alpha\left(\frac{2r}{1+r}\right),
    \end{equation}
where $|\zeta|=r$ and $0<\widetilde\eta<1/6$. An application of this estimate together with Lemma~\ref{l:i_est} gives the assertions.
\end{proof}

\section{Generalization of Cartwright's lemma and its application}\label{sec:cartright}

In this section we prove Proposition~\ref{cor:cartright} and establish Theorem~\ref{t:cartright} as its consequence. Both results are stated in the introduction. The proposition generalizes the Cartwright's lemma, see Lemma~\ref{Lemma:B}, while the theorem concerns estimates of real parts of analytic functions. To establish our results we will need the following Warschawski mapping theorem. This theorem was efficiently used in the earlier work~\cite{Bor_min} on the topic. 

\begin{lettertheorem}[{\cite[Sec.~7]{War41}}]\label{t:war}
	Let  $\omega:[0,\infty)\to(0,\infty)$ be a continuously differentiable function such that $\int_0^\infty\frac{(\omega'(t))^2}{\omega(t)}\,dt<\infty$ and $\lim_{t\to\infty}\omega'(t)=0$. Let $\zeta$ be a conformal map of the curvilinear strip $\{u+iv:|v|\le\omega(u)\}$ onto the strip ${\mathcal K}=\{z:|\Im z|\le\pi/2\}$ such that $\zeta(u+iv)\to\pm \infty$ as $u\to\pm \infty$. Then there exists a constant $k=k(\omega)\in\mathbb{R}$ such that
	$$
	{\zeta}(u+iv)=k+\frac{\pi}2\int_0^u\frac{dt}{\omega(t)}+
	i\frac{\pi v}
	{2\omega(u)}+o(1), \quad u\to\infty.
	$$
\end{lettertheorem}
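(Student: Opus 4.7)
The statement is a classical result from Warschawski's 1941 paper, included as a lettertheorem cited from \cite{War41} because the authors use it only as a black-box tool; the excerpt ends precisely after its statement, so no proof is given here. Were I to reconstruct one, I would proceed as follows.

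First I would introduce the explicit model map
$$
W(u+iv) = \frac{\pi}{2}\int_0^u \frac{dt}{\omega(t)} + i\,\frac{\pi v}{2\omega(u)},
$$
chosen so that (i) the curves $v=\pm\omega(u)$ are sent onto the horizontal lines $\Im W = \pm \pi/2$, and (ii) the real axis is sent onto the real axis. The map $W$ is a $C^1$-diffeomorphism between the two strips but is \emph{not} conformal; a direct computation shows its Beltrami coefficient is pointwise comparable to $\omega'(u)$, so under the hypothesis $\omega'(t)\to 0$ it is \emph{asymptotically} conformal.

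Next I would compare $\zeta$ and $W$ via extremal length. For $a<b$, the modulus of the curvilinear quadrilateral $Q(a,b) = \{u+iv : a\le u\le b,\, |v|\le\omega(u)\}$ with the two vertical sides distinguished is
$$
M(Q(a,b)) = \int_a^b \frac{du}{2\omega(u)} + \varepsilon(a,b),
$$
computed using the admissible metric $\rho = (2\omega)^{-1}$; the error $\varepsilon(a,b)$ is controlled by the tail of the Warschawski integral $\int_a^b (\omega'(u))^2/\omega(u)\,du$, which vanishes by hypothesis. Conformality of $\zeta$ preserves this modulus, and since the image in $\mathcal{K}$ becomes arbitrarily close to a standard vertical rectangle (because the image vertical cross-cuts $\zeta(\{u_0+iv:|v|\le \omega(u_0)\})$ straighten out as $u_0\to\infty$, again thanks to $\omega'\to 0$), matching moduli forces
$$
\Re\zeta(u) = \frac{\pi}{2}\int_0^u \frac{dt}{\omega(t)} + k + o(1), \quad u\to\infty,
$$
on the real axis for some normalization constant $k\in\mathbb{R}$.

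Finally, to promote the axial asymptotic to the full strip and extract the imaginary part, I would run a normal-families argument on the horizontally translated maps $\zeta_u(z) := \zeta(z+u) - W(u)$. The slow variation $\omega'(u)\to 0$ makes the domains $S_\omega - u$ (after the horizontal rescaling encoded in $W$) converge locally uniformly to the straight strip of width $\pi$, and the modulus-pinned normalization above forces every subsequential limit of $\zeta_u\circ W^{-1}$ to be the identity on $\mathcal{K}$; unravelling yields the desired asymptotic with the $o(1)$ remainder uniform on vertical cross-cuts.

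The main obstacle is the quadrilateral modulus computation: the metric $\rho = (2\omega)^{-1}$ is only approximately extremal, and proving that the discrepancy between its length-area upper bound and the true modulus is $o(1)$ is where the hypothesis $\int_0^\infty (\omega'(t))^2/\omega(t)\,dt < \infty$ enters essentially, and in fact sharply. A second subtler point is uniformity in $v$ in the final asymptotic, which requires genuine harmonic measure estimates on $\mathcal{K}$ rather than merely pointwise boundary convergence.
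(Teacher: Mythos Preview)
You are correct that the paper provides no proof of this statement: it is a lettertheorem quoted verbatim from Warschawski~\cite{War41} and used purely as a black box in the proof of Lemma~\ref{l:cartwr}. There is therefore nothing in the paper to compare your sketch against; your outline (model map, extremal-length estimate for curvilinear quadrilaterals with the error controlled by the Warschawski integral, then a normal-families straightening argument) is a faithful summary of the classical approach, but any detailed assessment would have to be made against~\cite{War41} itself rather than the present paper.
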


Proposition~\ref{cor:cartright} is a consequence of the following generalization of Cartwright's lemma.

\begin{lemma}\label{l:cartwr}
	Let $l:[1,\infty)\to(0,\infty)$ be a quasi proximate order such that $$0<\liminf_{t\to\infty}l(t)=l_1<\limsup_{t\to\infty}l(t) =l_2<\infty.$$ Define $L(r)=\int_1^r \frac{l(s)}{s}\,ds /\log r$ for all $1\le r<\infty$. 
	Let $\varepsilon>0$, $0<q<1$ and $0<R<\infty$. Let $G$ be analytic and non-vanishing such that
	\begin{equation}\label{hypothesis1}
	\log|G(re^{i\theta})|<r^{{L(r)}}
	\end{equation}
on the domain $\left\{re^{i\theta}:R\le r<\infty,\,|\theta|\le\frac{\pi}{2l(r)q}\right\}$. Then, for each fixed $\delta>0$ there exists $K=K(\delta, l, q)>0$ such that  we have
	\begin{equation}\label{e:lower_cart_ets}
	\log|G(re^{i\theta})|
	>-Kr^{L(r)}, \quad |\theta|\le\frac{\pi}{2l(r)q}-\delta,\quad r\to \infty.
	\end{equation}
\end{lemma}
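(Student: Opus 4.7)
The plan is to use the Warschawski mapping theorem (Theorem~\ref{t:war}) to conformally straighten the curvilinear sector $S = \{re^{i\theta}: r \ge R,\,|\theta|\le \pi/(2l(r)q)\}$ into a flat strip, and then reduce to the classical Cartwright lemma (Lemma~\ref{Lemma:B}) applied on a half-plane. The logarithmic substitution $\zeta = u + iv = \log z$ converts $S$ into the curvilinear strip $\tilde S = \{u+iv: u \ge \log R,\,|v|\le \omega(u)\}$ with $\omega(u) = \pi/(2q\, l(e^u))$. The Warschawski hypotheses are readily verified from the quasi proximate order axioms: since $|l'(t)|t\log t \lesssim 1$ and $l_1 \le l(t) \le l_2$ eventually, we obtain $|\omega'(u)| \lesssim 1/u \to 0$ and $\int^{\infty}(\omega'(u))^2/\omega(u)\, du \lesssim \int^\infty du/u^2 < \infty$.

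\textbf{Conformal change of variables.} Extending $\omega$ to a two-sided function if necessary, Theorem~\ref{t:war} furnishes a conformal map $\Psi$ from $\tilde S$ onto a right sub-strip of $\mathcal{K} = \{|\Im w|\le \pi/2\}$ satisfying
$$\Psi(u+iv) = k + q\int_0^u l(e^t)\, dt + i\, q\, l(e^u)\, v + o(1), \quad u\to\infty,$$
because $\pi/(2\omega(t)) = q\, l(e^t)$. Writing $r = e^u$ and $\theta = v$ this gives $\Re\Psi = k + qL(r)\log r + o(1)$ and $\Im\Psi = q\, l(r)\, \theta + o(1)$. Define $H(w) = G(\exp(\Psi^{-1}(w)))$, which is analytic and non-vanishing on $\Psi(\tilde S)$. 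The hypothesis $\log|G(z)| < r^{L(r)} = e^{L(r)\log r}$ transports to $\log|H(w)| \le C_1\, e^{\Re w/q}$ for $\Re w$ large enough, where the $o(1)$ error in the exponent is absorbed into a constant $C_1 = e^{-k/q + o(1)}$.

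\textbf{Half-plane Cartwright.} Composing further with $z_0 = e^w$ maps the strip onto the right half-plane and produces $\tilde H(z_0) = H(\log z_0)$, analytic and non-vanishing on $\{|\arg z_0|<\pi/2,\,|z_0|\ge R_1\}$, with $\log|\tilde H(z_0)| \le C_1 |z_0|^{1/q}$. Since $q<1$ forces $1/q>1$, the sector opening $\pi$ exceeds $\pi/(2\cdot 1/q) = \pi q/2$, so Lemma~\ref{Lemma:B} applies with $\alpha = 1/q$ and $\beta = \pi/2$: for every $\delta'' > 0$ there exists $K_1 = K_1(\delta'')$ with
$$\log|\tilde H(z_0)| > -K_1 C_1 |z_0|^{1/q}, \quad |\arg z_0| \le \pi/2 - \delta'',\quad |z_0|\ge R_2.$$

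\textbf{Translating back and the main obstacle.} For $z = re^{i\theta}$ with $|\theta| \le \pi/(2l(r)q) - \delta$, the asymptotic for $\Im\Psi$ yields $|\Im\Psi| \le \pi/2 - q\, l(r)\, \delta + o(1) \le \pi/2 - \delta''$ with $\delta'' = q l_1 \delta/2$ and $r$ sufficiently large, which lands precisely in the admissible range for the half-plane bound. Unwinding the substitutions produces
$$\log|G(re^{i\theta})| \ge -K_1 C_1\, e^{\Re\Psi/q} = -K_1 C_1\, e^{k/q + L(r)\log r + o(1)} \gtrsim -K r^{L(r)},$$
with $K$ depending only on $\delta$, $l$, and $q$, as required. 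The main obstacle is the careful bookkeeping with the asymptotic nature of the Warschawski formula: the $o(1)$ error terms in both $\Re\Psi$ and the upper bound for $|H|$ must be absorbed into multiplicative constants for sufficiently large $r$, and one has to verify that $\Psi$ is surjective onto a genuine right sub-strip $\{w \in \mathcal{K}: \Re w \ge w_0\}$ so that the half-plane Cartwright lemma applies without boundary complications at the inner end of the sector.
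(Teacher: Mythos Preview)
Your proposal is correct and follows essentially the same approach as the paper: both proofs pass to the logarithm to obtain a curvilinear strip with half-width $\omega(u)=\pi/(2q\,l(e^u))$, verify the Warschawski hypotheses from the quasi proximate order axioms, straighten to $\mathcal{K}$, and then apply Cartwright's Lemma~\ref{Lemma:B} in a flat sector before pulling the lower bound back. The only cosmetic difference is that the paper keeps a free exponent $\alpha$ in the auxiliary map $\psi^{-1}(z)=\exp\bigl(\tfrac{1}{\alpha q}\zeta(\log z)\bigr)$, whereas you make the natural choice $\alpha=1/q$ (your composition $z\mapsto e^{\Psi(\log z)}$ is exactly the paper's $\psi^{-1}$ with $\alpha q=1$); the resulting applicability condition $\beta>\pi/(2\alpha)$ reduces to $q<1$ in both cases.
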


\begin{proof}
	Let $0<q<1$ and define $\omega(t)=\frac{\pi}{2l(e^t)q}$ for all $0\le t<\infty$. Since
	\begin{equation} \label{e:prox_ord_prop}
	\limsup_{t\to\infty}|l'(t)|t\log t<\infty
	\end{equation}
	and $0<\liminf_{t\to\infty}l(t)=l_1<\limsup_{t\to\infty}l(t) =l_2<\infty$ by the hypotheses, we have
	$$
	|\omega'(t)|=\frac{\pi|l'(e^t)|e^t}{2l^2(e^t)}\lesssim\frac1t, \quad t\to \infty.
	$$
	Therefore, for sufficiently large $R>0$ we have
	\begin{equation*}
	\int_R^\infty\frac{(\omega'(t))^2}{\omega(t)}\,dt
	\lesssim\int_R^\infty\frac{dt}{t^2}<\infty.
	\end{equation*}
By re-defining $\om$ on $[0,R)$ in a suitable way if necessary we deduce that it satisfies the hypotheses of Theorem~\ref{t:war}. Since the statement of the lemma concerns the behavior of $G$ far a way from the origin, this re-definition does not affect to the proof.
	
For  $\alpha>0$, define 
	$$
	S_q(\alpha)=\left\{w=\rho e^{i\varphi}: \rho>0,\,|\varphi| \le \frac{\pi } {2\alpha q}\right\},$$ 
	$$	\widetilde S_q=\left\{z=re^{i\theta}:r>0,\,|\theta|\le\frac{\pi} {2l(r)q}\right\},
	$$
	and
	$$
	\log\widetilde S_q=\Bigl\{u+iv: -\infty< u<+\infty,\, |v|\le\om(u)=\frac{\pi}{2l(e^u)q}\Bigr\}.
	$$
	Let now $\zeta$ be a conformal map from $\log\widetilde S_q$ onto $\mathcal{K}$ such that $\zeta(z)\to\pm \infty$ as $z\to\pm \infty$. Consider the functions
	$$
	\psi\colon S_q(\alpha)\to \widetilde S_q,
	\quad z=\psi(w)=\exp \left( \zeta^{-1}(\alpha q\log w)\right),
	$$
	and
	$$
	\psi^{-1}\colon\widetilde S_q \to S_q(\alpha),
	\quad w=\psi^{-1}(z)=\exp\left(\frac1{\alpha q}\zeta(\log z)\right),
	$$
	where $\log$ denotes the principal branch of the logarithm. Define $F=G\circ\psi$ on $S_q(\alpha)$. By the hypothesis \eqref{hypothesis1} we have
	\begin{equation}\label{e:Gupp}
	\log|F(w)|=\log|G(\psi(w))|<|\psi(w)|^{{L(|\psi(w|)}},\quad|\psi(w)|\ge R.
	\end{equation}
	In order to apply Lemma~\ref{Lemma:B} we have to verify \eqref{e:cart_assum} for $F$. In view of \eqref{e:Gupp} it is enough to show $|\psi(w)|^{{L(|\psi(w)|)}}<B|w|^\alpha$, or equivalently,
	\begin{equation}\label{e:dag}
	r^{{L(r)}}\le B|\psi^{-1}(re^{i \theta})|^\alpha.
	\end{equation}
	By Theorem~\ref{t:war} there exists $k\in \mathbb{R}$ such that
	\begin{equation}\label{vehe}
	\begin{split}
	{\zeta}(\log r +i\theta)
	&=k+\frac{\pi}2\int_0^{\log r}\frac{dt}{\omega(t)}+i\frac{\pi \theta}{2\omega(\log r)}+o(1),\quad r\to\infty,\\
	&=k+q \int_0^{\log r} l(e^t)\,dt + i q\theta l(r) +o(1),\quad r\to\infty,\\
	&=k+q\int_1^r \frac{l(s)}s\,ds +iq\theta l(r) +o(1),\quad r\to\infty,
	\end{split}
	\end{equation}
	provided $|\theta|\le\om(\log r)=\frac{\pi}{2l(r)q}$, and consequently,
	\begin{equation*}
	\begin{split}
	|\psi^{-1}(re^{i\theta})|^\alpha
	&=\left| e^{\frac 1q \zeta(\log r+i\theta)}\right|
	=\exp\left(\frac kq +\int_1^r\frac{l(s)}{s}\,ds+o(1)\right)\\
	&=r^{L(r)} e^{\frac kq +o(1)},\quad r\to\infty,
	\end{split}
	\end{equation*}
	whenever $|\theta|\le\frac{\pi}{2l(r)q}$. It follows that $|\psi^{-1}(re^{i\theta})|^\alpha=r^{L(r)} d(r)$, where $d(r)=(1+o(1))e^{\frac kq}$ as $r\to\infty$, and hence \eqref{e:dag} is valid with $B=\sup_{r>R} d(r)$. Therefore Lemma~\ref{Lemma:B} implies that, for a given $\delta_1>0$, there exists a constant $K=K(\delta_1)>0$ such that
	$$
	\log|F(w)|=\log|G(\psi(w))|>-KB                      |w|^\alpha, \quad |\arg w|\le \frac{\pi}{2\alpha q}-\delta_1.
	$$
	Now \eqref{vehe} yields
	$$
	\arg \psi^{-1} (re^{i \theta})
	=\Im\left(\frac1{\alpha q}\zeta(\log r+ i\theta))\right)
	\sim\frac{\theta }{\alpha} l(r), \quad r\to\infty,
	$$
	and therefore, by choosing $\delta_1=\delta_1(\delta)>0$ sufficiently small we deduce
	$$
	\log|G(re^{i\theta})|>-K|\psi^{-1}(re^{i\theta})|^\alpha
	=-(1+o(1))K r^{L(r)} e^{\frac kq} \ge -K_1 r^{L(r)},
	$$
	where $	 |\theta|\le \frac{\pi}{2ql(r)}-\delta$.
	The lemma is now proved.
\end{proof}

What remains to be done in this section is to deduce the results we are after from Lemma~\ref{l:cartwr}.

\begin{proof}[Proof of Proposition~\ref{cor:cartright}]
The proposition is an immediate consequence of Lemma~\ref{l:cartwr} because for a generalized proximate order we have $L(r)\sim l(r)$ as $r\to \infty$. In fact, (4') implies that for each $\e>0$ there exists $R=R(\e,l_1)>1$ such that $|l'(s)s\log s|<\frac{\e l_1}3$ for all $s\ge R$. Therefore, for some constant $C=C(R,l)>0$ we have
	\begin{equation*}
	\begin{split}
	\left|L(r)-l(r)\right|
	&=\frac1{\log r}\left|\int_1^rl'(s)\log s\,ds\right|
	\le\frac1{\log r}\left(\int_1^R|l'(s)|\log s\,ds+\frac{\e l_1}3\int_R^r\frac{ds}{s}\right)\\
	&\le\frac{C}{\log r} +\frac{\e l_1}3
	\le\frac{\e l_1}2 , \quad r\to\infty,
	\end{split}
	\end{equation*}
and consequently, $\left|\frac{L(r)}{l(r)}-1\right|<\e$ as $r\to\infty$. 
\end{proof}

\begin{proof}[Proof of Theorem~\ref{t:cartright}] As in \cite{CartWr33}, the statement of the theorem follows from Proposition~\ref{cor:cartright}. Let $\e>0$ and $0\le\theta<2\pi$ fixed. Choose $\e_1>0$, $0<q<1$ and $0<R<\infty$ such that 
	\begin{equation}\label{pillukarva}
	\sup_{r\ge R}\e_1\left(1+\left(1+\e_1\right)\left(\sigma(r)\right)^{-1}\right)<\e
	\quad\textrm{and}\quad
	\inf_{r\ge R}\sigma(r)(1+\varepsilon_1)q>1.
	\end{equation}
	Then the function
	$$
	G(w)=\exp\left(f\left(e^{i\theta}\left(1-\frac 1w\right)\right)\right)
	$$
	satisfies the hypotheses of Proposition~\ref{cor:cartright} with $l(t)=(1+\varepsilon_1)(\sigma(t)+\e_1)$ and sufficiently large $R$. In fact, by writing $w=1/(1-z)$ we have $|\arg w|\le\frac{\pi}{2l(|w|)q}$ if and only if $|\arg \frac1{1-z}|\le\pi\left(2l\left(\frac1{\left|1-z\right|}\right)q\right)^{-1}$, and hence $|1-z|\asymp1-|z|$ by the right hand inequality in \eqref{pillukarva}. This together with the hypothesis \eqref{Eg:hypothesis-cartright} shows that 
	\begin{equation*}
	\begin{split}
	\log|G(w)|
	&=\Re f\left(e^{i\theta}\left(1-\frac 1w\right)\right)
	=\Re f(e^{i\theta}z)
	\le\frac{1}{(1-|z|)^{\lambda(|z|)}}
	\le\frac{C_1}{|1-z|^{\lambda(|z|)}}\\
	&=C_1|w|^{\sigma\left(\frac1{1-\left|1-\frac1w\right|}\right)}
	\lesssim |w|^{\sigma\left(\left|w\right|\right)},\quad |w|\to\infty,
	\end{split}
	\end{equation*}
	for some constant $C_1=C_1(\sigma,\e_1,q)>0$. The last equality is valid because $\sigma$ is a generalized proximate order by the hypothesis, and hence there exist constants $C_2=C_2(w,\sigma,\e_1,q)\ge1$ and $C_3=C_3(\sigma,\e_1,q)\ge1$ such that $\sup_wC_2<\infty$ and
	\begin{equation*}
	\begin{split}
	\left|\sigma\left(\frac1{1-\left|1-\frac1w\right|}\right)-\sigma\left(\left|w\right|\right)\right|
	&=|\sigma'(C_2|w|)|\left(\frac1{1-\left|1-\frac1w\right|}-|w|\right)\\
	&\le|\sigma'(C_2|w|)|C_3|w|=O\Bigl(\frac 1{\log|w|}\Bigr),\quad |w|\to\infty,
	\end{split}
	\end{equation*}
	provided $|\arg w|\le\frac{\pi}{2l(|w|)q}$. Therefore, by choosing $\delta=\frac{\pi}{4l_2q}$ and using the left hand inequality in \eqref{pillukarva} we deduce
	\begin{equation*}
	\begin{split}
	\Re f(re^{i\theta})
	&=\log\left|G\left(\frac1{1-r}\right)\right|>-\left(\frac1{1-r}\right)^{(1+\varepsilon_1)\left(\sigma\left(\frac1{1-r}\right)+\e_1\right)+o(1)}\\
	&>-\frac{1}{(1-r)^{(1+\varepsilon)\lambda(r)}},\quad r\to1^-.
	\end{split}
	\end{equation*}
	Since $0\le\theta<2\pi$ was arbitrary, the assertion is proved.
\end{proof}

\section{Integral mean estimates and their consequences}\label{sec:final}

In this last section we prove the integral mean estimates stated in Theorem~\ref{t:p_lower_order} and deduce Corollary~\ref{c:m_p} as its consequence. Several auxiliary results are needed. We begin with two known ones concerning the canonical product
	\begin{equation}\label{e:canon_prod}
	\mathcal{P}(z)
	=\mathcal{P}\left(z,(a_k)_{k=1}^\infty,s\right)
	=\prod_{k=1}^\infty E(A(z, a_k), s),\quad \sum_{k=1}^\infty(1-|a_k|)^{s+1}<\infty,
	\end{equation}
where  
	$$
	E(w,s)=(1-w)\exp\left(w+\frac{w^2}{2} +\dots+ \frac{w^s}{s}\right),\quad s\in \N,
	$$ 
is the Weierstrass primary factor and $A(z,\zeta)=\dfrac{1-|\zeta|^2}{1-z\overline\zeta}$ for all $z\in \mathbb{D}$ and $\zeta\in\overline{\mathbb{D}}$. Both of them concern sharp estimates of $\log|\mathcal{P}|$ in terms of sums of $|A(z,a_k)|$'s.

\begin{lettertheorem}[{\cite[p.224]{Tsuji}}]\label{t:tsuji} For each canonical product $\mathcal{P}$ and $\varepsilon>0$ there exists a constant $K=K(\mathcal{P},\e)>0$ such that
	\begin{equation}\label{2.4l}
  \log|\mathcal{P}(z)|\le K \sum_{k=1}^\infty|A(z,a_k)|^{\mu+1+\varepsilon},\quad\frac12\le|z|<1,
  \end{equation} 
where $\mu$ is the exponent of convergence of the zero sequence $(a_k)_{k=1}^\infty$.
Further, if $D_k=D(a_k,(1-|a_k|^2)^{\mu+4})$ for all $k\in\N$, then
	\begin{equation}\label{2.5l}
  \log|\mathcal{P}(z)|\ge K\log(1-|z|)\sum_{k=1}^\infty|A(z,a_k)|^{\mu+1+\varepsilon},
	\quad\frac12\le|z|<1,\quad z\not\in\bigcup_{k=1}^\infty D_k.
  \end{equation}
\end{lettertheorem}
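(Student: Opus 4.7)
\medskip

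\noindent\textbf{Proof plan.}
The plan is to reduce both bounds to the standard identity
\begin{equation*}
\log|E(w,s)|=\log|1-w|+\Re\Bigl(w+\frac{w^2}{2}+\cdots+\frac{w^s}{s}\Bigr).
\end{equation*}
Expanding $\log(1-w)$ as a Taylor series for $|w|<1$, this rewrites as
\begin{equation*}
\log|E(w,s)|=-\Re\sum_{j=s+1}^\infty\frac{w^j}{j},
\end{equation*}
which yields the two-sided working bound $\bigl|\log|E(w,s)|\bigr|\le C_s\,|w|^{s+1}/(1-|w|)$ on $|w|<1$. I will apply these with $w=A(z,a_k)$ and genus $s$ chosen so that $s\le\mu<s+1$, which makes $\sum(1-|a_k|)^{\mu+1+\e}<\infty$ by the very definition of the exponent of convergence. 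Note also that $|A(z,a_k)|\le(1-|a_k|^2)/(1-|z|)$, so the sum $\sum|A(z,a_k)|^{\mu+1+\e}$ is finite for each fixed $z$.

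For the upper bound \eqref{2.4l} I would split the index set into $J_1=\{k:|A(z,a_k)|\le 1/2\}$ and its complement $J_2$. On $J_1$ the series form gives $\log|E(A(z,a_k),s)|\lesssim|A(z,a_k)|^{s+1}$, which is dominated by $|A(z,a_k)|^{\mu+1+\e}$ after absorbing a small loss. On $J_2$ the quantity $|1-A(z,a_k)|$ tends to be small, so the decomposition $\log|E|=\log|1-A|+\Re P(A)$ gives an upper bound $\le\Re P(A)+O(1)$, with $\Re P$ itself of order $|A|^s$ since $|A|$ is bounded by a constant on $J_2$. Summation against the convergent series $\sum(1-|a_k|)^{\mu+1+\e}$ will then complete the estimate.

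The lower bound \eqref{2.5l} is more delicate because $\log|E(w,s)|\to-\infty$ at $w=1$, that is, at each $z=a_k$. Removing the discs $D_k$ provides the quantitative separation $|z-a_k|\ge(1-|a_k|^2)^{\mu+4}$ which, through the identity $1-A(z,a_k)=\bar a_k(a_k-z)/(1-z\bar a_k)$, propagates to
\begin{equation*}
|1-A(z,a_k)|\gtrsim\frac{(1-|a_k|^2)^{\mu+4}}{|1-z\bar a_k|}.
\end{equation*}
Taking logarithms, I would combine the resulting estimate for $\log|1-A(z,a_k)|$ with the reverse version of the series bound applied to $\Re P(A(z,a_k))$, and then compare the outcome termwise with $|A(z,a_k)|^{\mu+1+\e}$, with the excess absorbed into the factor $\log(1-|z|)$.

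The hard part will be showing that this termwise comparison genuinely survives summation: the exponent $\mu+4$ in the radius of $D_k$ has to be large enough that the arising losses of order $\log(1-|a_k|^2)$ and $\log|1-z\bar a_k|^{-1}$ can be jointly controlled by a single factor $\log(1-|z|)$ times the already-convergent sum $\sum|A(z,a_k)|^{\mu+1+\e}$, and small enough that the excluded set $\bigcup D_k$ remains thin. The remaining work consists of routine bookkeeping of $\e$ across the two splittings.
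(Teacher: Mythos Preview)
The paper does not prove this statement at all: it is stated as a \texttt{lettertheorem} and attributed directly to Tsuji \cite[p.~224]{Tsuji}, with no proof or sketch given in the present paper. So there is no ``paper's own proof'' to compare your proposal against; the authors simply quote the result and use it as a black box in the proof of Lemma~\ref{l:lin1} and of Theorem~\ref{t:p_lower_order}.

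That said, your sketch is essentially the classical argument one finds in Tsuji's book: split according to the size of $|A(z,a_k)|$, use the Taylor expansion bound $|\log|E(w,s)||\lesssim|w|^{s+1}$ on the small-$|A|$ part, and on the large-$|A|$ part exploit $1-A(z,a_k)=\bar a_k(a_k-z)/(1-z\bar a_k)$ together with the separation $|z-a_k|\ge(1-|a_k|^2)^{\mu+4}$ to control $\log|1-A|$ from below. The only place you should be a bit more careful is the upper bound on $J_2$: there $|A(z,a_k)|$ need not be bounded above by an absolute constant (it can be as large as $(1-|a_k|^2)/(1-|z|)$), so ``$\Re P(A)$ is of order $|A|^s$'' is not immediately a bounded quantity. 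In Tsuji's treatment this is handled by noting that for $|A|\ge1/2$ one has $|A|^j\le 2^{\,\mu+1+\e-j}|A|^{\mu+1+\e}$ for each $j\le s\le\mu$, so the polynomial part is again dominated by $|A|^{\mu+1+\e}$; and $\log|1-A|\le\log^+(1+|A|)\lesssim|A|^{\mu+1+\e}$ as well. With that adjustment your outline matches the standard proof.
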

						
It is  known~\cite{DrSh} that P\'olya's order $\rho^*(\psi)$ for a non-decreasing function $\psi:[r_0,\infty)\to(0,\infty)$ can be determined by the formula
\begin{equation}\label{e:psi_con}
\rho^*(\psi)
=\sup\left\{\rho:\limsup_{x,C	\to\infty}\frac{\psi(Cx)}{C^\rho\psi(x)}=\infty\right\}.
\end{equation}
It is finite if and only if $\psi$ satisfies Caramata's condition $\psi(2t)\lesssim\psi(t)$ as $t\to\infty$. The P\'olya's order $\rho^*(\psi)$ is not smaller than the order of growth of $\psi$, that is, $\psi(x)\lesssim x^\rho$ as $x\to\infty$ for all $\rho>\rho^*(\psi)$, but not vice versa. 
						
\begin{letterlemma}[{\cite[Lemma 9]{ChyShep}}]\label{l:lin_can} Let $f$ be an analytic function in $\D$ with a zero sequence $Z=(a_k)_{k=1}^\infty$ such that
	$$
	n_1\left(r,f\right)\le\psi\left(\frac1{1-r}\right),\quad 0\le r<1, 
	$$
where the function $\psi$ satisfies Caramata's condition $\psi(2t)\lesssim\psi(t)$ as $t\to\infty$. Then, for $s>\rho^*(\psi)$, there exists a constant $C>0$ such that the canonical product $\mathcal{P}(z)=\mathcal{P}\left(z,\left(a_k\right)_{k=1}^\infty,s\right)$ admits the estimate
	\begin{equation}\label{e:lem1}
	\log|\mathcal{P}(z)|\le2^{s+2}\sum_{k=1}^\infty \left|A(z,a_k)\right|^{s+1} 
	\le C\widetilde\psi\left(\frac1{1-|z|}\right), \quad z\in\D,
	\end{equation}
where $\widetilde{\psi}(t)=\int_{1}^t \frac{\psi(x)}{x}\,dx$.
\end{letterlemma}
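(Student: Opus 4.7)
The plan is to prove the two inequalities in \eqref{e:lem1} separately. For the first, I would invoke the standard pointwise estimate on the Weierstrass primary factor, $\log|E(w,s)| \le 2^{s+2}|w|^{s+1}$ for $|w| \le 2$, obtained by expanding $\log|E(w,s)| = \log|1-w| + \Re(w + \cdots + w^s/s)$ and carefully controlling the tails. Combined with the universal bound $|A(z,a_k)| \le 1+|a_k| \le 2$ for $z, a_k \in \D$, summing over $k$ yields the first inequality.

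For the main second inequality, fix $z = re^{i\theta_0}$, set $\tau = 1-r$, and carry out a dyadic decomposition of the zero sequence based on the asymptotic $|1 - z\overline{a_k}| \asymp \max(\tau + (1-|a_k|),\, |\arg a_k - \theta_0|)$. Introduce cells $S_{j,l}$ for $j \ge 0$ and $l \ge -1$: a zero $a_k$ lies in $S_{j,l}$ if $1-|a_k| \in [2^{j-1}\tau, 2^j\tau]$ (with $j=0$ giving $[0,\tau]$) and $|\arg a_k - \theta_0| \in [2^l\tau, 2^{l+1}\tau]$ (with $l=-1$ giving $[0,\tau]$). Inside $S_{j,l}$ one checks that $|A(z,a_k)|^{s+1} \asymp \min(1,\, 2^{(s+1)(j-l)})$.

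The counting function $n_1$ is now used to bound $|S_{j,l}|$. A standard $n_1$-rectangle at radial scale $2^j\tau$ has angular width $\sim 2^j\tau$ and contains at most $\psi(2^{-j-1}/\tau)$ zeros by hypothesis. For $l \le j$, the entire angular band $|\arg a_k - \theta_0| \lesssim 2^j\tau$ is covered by $O(1)$ such rectangles, yielding $\sum_{l\le j}|S_{j,l}| \lesssim \psi(2^{-j}/\tau)$ and hence a contribution $\lesssim \psi(2^{-j}/\tau)$ to $\sum_k|A(z,a_k)|^{s+1}$. For $l > j$, cover $S_{j,l}$ by $O(2^{l-j})$ such rectangles to obtain $|S_{j,l}| \lesssim 2^{l-j}\psi(2^{-j}/\tau)$, whose weighted total $\sum_{l>j} 2^{-s(l-j)}\psi(2^{-j}/\tau)$ sums to $O(\psi(2^{-j}/\tau))$ for $s > 0$.

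Summing over $j$ from $0$ to $\lfloor\log_2(1/\tau)\rfloor$ then gives $\sum_k|A(z,a_k)|^{s+1} \lesssim \sum_j\psi(2^{-j}/\tau)$, and this dyadic sum is comparable to $\widetilde\psi(1/\tau) = \int_1^{1/\tau}\psi(x)/x\,dx$ by a standard Riemann-sum-to-integral comparison, justified because Caramata's condition $\psi(2t)\asymp\psi(t)$ makes $\psi$ essentially constant on dyadic intervals. The hypothesis $s>\rho^*(\psi)$ guarantees $\mathcal{P}$ is well defined through $\sum_k(1-|a_k|)^{s+1}<\infty$. The main obstacle I anticipate is the bookkeeping in the dyadic decomposition --- in particular, aggregating cells with $l\le j$ under $O(1)$ $n_1$-rectangles to avoid overcounting, and rigorously justifying the sum-to-integral comparison via Caramata's property.
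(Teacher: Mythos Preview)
This lemma is quoted from \cite{ChyShep} and is not proved in the present paper; it is stated as a cited auxiliary result (hence the letter label), so there is no proof here to compare your proposal against.

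That said, your outline is essentially the standard argument for results of this type and is almost certainly close to what appears in \cite{ChyShep}. The first inequality is indeed a direct consequence of the elementary Weierstrass--factor bound together with $|A(z,a_k)|\le 1+|a_k|<2$. For the second, the dyadic decomposition in both the radial and angular directions, combined with covering cells by $n_1$-rectangles at the matching radial scale, is exactly the right mechanism; your bookkeeping of the cases $l\le j$ (covered by $O(1)$ rectangles) versus $l>j$ (covered by $O(2^{l-j})$ rectangles, then summed geometrically using $s>0$) is correct, as is your identification of $s>\rho^*(\psi)$ as the condition ensuring $\sum_k(1-|a_k|)^{s+1}<\infty$ and hence convergence of $\mathcal{P}$.

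One small point to watch: you invoke ``Caramata's condition $\psi(2t)\asymp\psi(t)$'' to justify the sum-to-integral comparison $\sum_j\psi(2^{-j}/\tau)\lesssim\widetilde\psi(1/\tau)$, but the hypothesis is only the one-sided bound $\psi(2t)\lesssim\psi(t)$, which by itself does not force $\psi$ to be essentially constant on dyadic intervals. What makes the comparison work is that $\psi$ is nondecreasing (natural, since it majorizes a counting function, and true in every application in this paper). With monotonicity, on each block $[2^{-j-1}/\tau,2^{-j}/\tau]$ one has $\psi(x)\ge\psi(2^{-j-1}/\tau)\gtrsim\psi(2^{-j}/\tau)$ by Caramata, and the integral over that block dominates the corresponding summand. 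You should make this assumption explicit.
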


The main step towards Theorem~\ref{t:p_lower_order} and Corollary~\ref{c:m_p} is the following lemma which is a counterpart of \cite[Lemma 1]{Li_mp} to the language used in this paper. Its proof relies on Proposition~\ref{p:i_est} and Lemma~\ref{l:i_est} as well as on a number of auxiliary results from the existing literature of which two are stated above.
     
\begin{lemma}\label{l:lin1}
Let $f$ be analytic in $\D$ such that $0\le\lambda_M(f)<\sigma_M(f)<\infty$, and $1\le p<\infty$. Let $\varepsilon>0$ and let $\lambda$ be a quasi proximate order of $f$, related to $\eta\in (0,\sigma_M(f)-\lambda_M(f))$. Then
	$$
	m_p\left(r,\log\left|\mathcal{P}\left(\cdot,(a_k)_{k=1}^\infty,s\right)\right|\right)
	\lesssim\frac{1}{(1-r)^{\lambda(r)\left(1-\frac1{\sigma_M(f)}\right)^++1+\varepsilon}},\quad 0\le r<1,
	$$
where $(a_k)_{k=1}^\infty$ is the zero sequence of $f$, $s=[\rho^*(A^*)]+1$ and $A^*$ is the function from the definition of the quasi proximate order $\lambda$.
\end{lemma}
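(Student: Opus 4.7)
My plan is to derive the $L^p$-mean bound from two pointwise estimates on $\log|\mathcal{P}|$: an upper bound obtained from Proposition~\ref{p:i_est} via Lemma~\ref{l:lin_can}, and a matching lower bound obtained from Theorem~\ref{t:tsuji} valid off a thin exceptional set, whose contribution to $m_p$ is then shown to be absorbed.

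To set up the upper bound, choose
$$
\psi(t) \;=\; C\,t^{\,1+\lambda(1-1/t)(1-1/\sigma_M(f))^{+}+\varepsilon/3}
$$
(smoothed to be non-decreasing). Proposition~\ref{p:i_est} gives $n_1(1-1/t,f)\le\psi(t)$ for $t$ large; property (6) of the quasi proximate order of $f$ implies $\psi(2t)\lesssim\psi(t)$, while property (5) together with \eqref{e:psi_con} forces $\rho^*(\psi)<s=[\rho^*(A^*)]+1$ once $\varepsilon$ is small enough. Lemma~\ref{l:lin_can} then gives
$$
\log|\mathcal{P}(z)|\;\le\;2^{s+2}\sum_{k}|A(z,a_k)|^{s+1}\;\le\;C\widetilde\psi\!\bigl(\tfrac{1}{1-|z|}\bigr)\;\lesssim\;\frac{1}{(1-|z|)^{\lambda(|z|)(1-1/\sigma_M(f))^{+}+1+\varepsilon/2}},
$$
since $\widetilde\psi(t)=\int_{1}^{t}\psi(x)/x\,dx$ has the same polynomial order as $\psi(t)$ up to a logarithmic factor that is absorbed into $\varepsilon$. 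For the matching lower bound I would invoke \eqref{2.5l}. Although the exponent there is $\mu+1+\varepsilon/3$ rather than $s+1$, the sum $\sum_{k}|A(z,a_k)|^{\mu+1+\varepsilon/3}$ is bounded by the same quantity after splitting according to whether $|A(z,a_k)|<\tfrac12$ (dominated by $\sum_k|A(z,a_k)|^{s+1}$) or $\ge\tfrac12$ (controlled by $n_1(|z|,f)$ via Proposition~\ref{p:i_est}, since such zeros lie in a pseudohyperbolic neighbourhood of $z$ of bounded radius). Combining this with the upper bound and absorbing the factor $\log(1/(1-|z|))$ that appears in \eqref{2.5l} into $\varepsilon$ yields
$$
|\log|\mathcal{P}(z)||\;\lesssim\;\frac{1}{(1-|z|)^{\lambda(|z|)(1-1/\sigma_M(f))^{+}+1+\varepsilon}},\qquad z\in\{|z|=r\}\setminus\bigcup_{k}D_{k},
$$
which at once controls the contribution of $\{|z|=r\}\setminus\bigcup_{k}D_{k}$ to $m_p(r,\log|\mathcal{P}|)$.

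It remains to show that the exceptional arcs $\bigcup_{k}D_{k}\cap\{|z|=r\}$ contribute negligibly to $\int_{0}^{2\pi}|\log|\mathcal{P}(re^{i\theta})||^{p}\,d\theta$. Only zeros $a_k$ with $\bigl||a_k|-r\bigr|\le(1-|a_k|^2)^{\mu+4}$ are relevant, each through an arc of angular length $O((1-|a_k|^2)^{\mu+4})$; Proposition~\ref{p:i_est} bounds their total number by $\psi(1/(1-r))/(1-r)$. Locally $|\mathcal{P}(z)|\asymp|z-a_k|\cdot|\mathcal{Q}_k(z)|$ with $\mathcal{Q}_k$ nonvanishing and of controlled size, so $\int_{D_k\cap\{|z|=r\}}|\log|z-a_k||^{p}\,d\theta\lesssim(1-|a_k|^2)^{\mu+4}|\log(1-|a_k|)|^{p}$. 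Summing and taking $p$-th roots produces an error term dominated by the main bound thanks to the generous exponent $\mu+4$. The principal technical obstacle is precisely this exceptional-set bookkeeping, but the large exponent in the definition of $D_k$ provides more than enough room to overcome the logarithmic blow-up of $\log|\mathcal{P}|$ near each zero.
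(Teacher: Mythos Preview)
Your overall strategy---upper bound from Lemma~\ref{l:lin_can}, lower bound from \eqref{2.5l}, separate treatment of the exceptional arcs---matches the paper's, but there is a genuine error in your handling of the sum $\sum_k|A(z,a_k)|^{\mu+1+\varepsilon/3}$. For the terms with $|A(z,a_k)|<\tfrac12$ you claim domination by $|A(z,a_k)|^{s+1}$, but the inequality runs the wrong way: since $\mu+\varepsilon/3<s$ (you yourself argue $\mu\le\rho^*(\psi)<s$) and $|A|<1$, one has $|A|^{\mu+1+\varepsilon/3}\ge|A|^{s+1}$, so the smaller-exponent sum is the \emph{larger} one. The fix is immediate and is exactly what the paper does: the parameter $\varepsilon$ in Theorem~\ref{t:tsuji} is free, so take it equal to $s-\mu>0$; then the exponent in both \eqref{2.4l} and \eqref{2.5l} becomes exactly $s+1$ and Lemma~\ref{l:lin_can} bounds the sum directly, with no splitting needed.

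Your exceptional-set bookkeeping is also rougher than what actually works. The paper first covers $\{|z|=r\}$ by arcs of length $2(1-r)$; on each arc it lets $F$ be the set of indices $k$ whose disc $D_k$ meets that arc (so $\#F\lesssim\psi(1/(1-r))$ via \eqref{e:nglob} and Lemma~\ref{l:i_est}) and factorizes $\mathcal{P}=B_1B_2B_3$, where $B_1$ is the product over $k\notin F$, $B_3=\prod_{k\in F}(1-A(z,a_k))$, and $B_2$ is the residual exponential factor. Then $B_1$ is handled by Theorem~\ref{t:tsuji} plus Lemma~\ref{l:lin_can}, while the estimate $\int_{r-1}^{1-r}\bigl(|\log|B_2||^p+|\log|B_3||^p\bigr)\,d\theta\lesssim(1-r)(\#F)^p$ from \cite{Li_mp} disposes of the rest. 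Your zero-by-zero argument ($|\mathcal P|\asymp|z-a_k|\,|\mathcal Q_k|$) does not address the case when several $D_k$ overlap, in which event $\mathcal Q_k$ is neither nonvanishing nor of controlled size on $D_k$; extracting all offending factors simultaneously via $B_3$ is precisely what makes the argument go through.
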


\begin{proof}
If $\sigma_M(f)\le1$, then 
the assertion is an immediate consequence of \cite[Lemma~1]{Li_mp}. Assume now that $\sigma_M(f)>1$.
Define 
	\begin{equation}\label{Eq:def-psi}
	\psi(t)=\left(A^*\left(1-\frac 1t\right)\right)^{1-\frac1{\sigma_M(f)}}t^{1+\varepsilon},\quad 1\le t<\infty,
	\end{equation}
where $A^*$ is the function from the definition of $\lambda$. By Proposition~\ref{p:i_est} and property (5) we have
	\begin{equation}\label{e:zero_est}
	n_1(r,f)
	\lesssim\left(\frac{1}{(1-r)^{\lambda(r)}}\right)^{1-\frac 1{\sigma_M(f)}}\frac{1}{(1-r)^{1+\varepsilon}}
	\lesssim\frac{(A^*(r))^{1-\frac1{\sigma_M(f)}}}{(1-r)^{1+\varepsilon}}
	=\psi\left(\frac1{1-r}\right).
	\end{equation} 
By the property (6) of a quasi proximate order, $\psi$ satisfies Caramata's condition, and consequently, $\psi$ has finite P\'olya order
$\rho^*=\rho^*(\psi)$. Therefore we may apply Lemma~\ref{l:lin_can} with $s=[\rho^*(A^*)]+1$. Moreover, as $A^*$ is nondecreasing by (6), we have
	\begin{equation}\label{e:zero_tild_est}
	\begin{split}
	\widetilde{\psi}(t)
	&=\int_1^t \frac{\psi(x)}{x} \, dx=
	\int_0^t\left(A^*\left(1-\frac1x\right)\right)^{1-\frac 1{\sigma_M(f)}}x^{\varepsilon}\,dx\\
	&\le\frac1{1+\varepsilon}\left(A^*\left(1-\frac1t\right)\right)^{1-\frac 1{\sigma_M(f)}}t^{1+\varepsilon}
	\le\psi(t),\quad 1\le t<\infty.
	\end{split}
	\end{equation}

To establish the statement we follow the scheme of the proof of \cite[Lemma~1]{Li_mp}. Without loss of generality, we may assume that $1/2\le r<1$ and $\frac 34 \le |a_k|<1$ for all $k\in\N$. We cover the set of integration by the intervals $[\tau+r-1,\tau+1-r]$, where $\tau=2k(1-r)$ and $k=0,1,\dots, [\frac{\pi}{1-r}]$, and show that
	\begin{equation}\label{2.9l}
  \int_{\tau+r-1}^{\tau+1-r}|\log|\mathcal{P}(re^{i\theta})||^p\,d\theta 
	\lesssim\frac{(1-r)\left(\log\frac1{1-r}\right)^p}{(1-r)^{\left(\lambda(r)\left(1-\frac 1{\sigma_M(f)}\right)+1+\varepsilon\right)p}}
  \end{equation}
for each $\tau$. Then the statement of the lemma follows by summing over $\tau$. 

Without loss of generality we may suppose that $\tau=0$. For given $r$, let $F$ denote the set of integers $m$ for which the exceptional disc $D_m$ of Theorem \ref{t:tsuji} intersects $\gamma_r=\{ re^{i\theta}: r-1\le \theta \le 1-r\}$. 

Application of \eqref{e:nglob} and Lemma~\ref{l:i_est} show that
	\begin{equation}\label{2.11l}
  \# F\lesssim \psi\left(\frac 1{1-r}\right).
  \end{equation}
		
Consider the factorization $\mathcal{P}=B_1B_2B_3$, where
	$$
  B_1(z)=\prod_{k\not\in F} E(A(z,a_k),s), \quad B_2(z)=\prod_{k\in F}\exp\left(\sum_{j=1}^s\frac1j(A(z,a_k))^j\right),  
	$$
and
	$$
	B_3(z)=\prod_{k\in F}(1-A(z,a_k))=\prod_{k\in F}\frac{\overline a_k(a_k-z)}{1-z\overline a_k}.
	$$
It follows from \eqref{e:lem1}  that  the series  $\sum_{k=1}^\infty \Bigl( \frac{1-|a_k|^2}{|1-z\bar a_k|}\Bigr)^{s+1} $   is convergent for every $z\in \D$ provided that $s>\rho^*(\psi)$. Choosing $z=0$ we deduce that the convergence exponent $\mu$ of the sequence $(a_k)_{k=1}^\infty $ satisfies   $\mu \le \rho^*(\psi)$. Thus,  using  Theorem~\ref{t:tsuji} with $\varepsilon=s-\mu$ we obtain
	\begin{equation*}
  \int_{r-1}^{1-r} |\log|{B_1}(re^{i\theta})||^pd\theta
	\lesssim\left(\log\frac1{1-r}\right)^p\int_{r-1}^{1-r}\left(\sum_{k\not\in F}|A(re^{i\theta},a_k)|^{s+1}\right)^p\,d\theta.
  \end{equation*}
Then Lemma~\ref{l:lin_can}, \eqref{e:zero_tild_est} and property (5) yield
	\begin{equation}\label{e:appl_lema9}
	\begin{split}
	\int_{r-1}^{1-r}|\log |{B_1}(re^{i\theta})||^pd\theta 
	&\lesssim\left(\log\frac1{1-r}\right)^p\widetilde\psi\left(\frac1{1-r}\right)^p(1-r)\\
	&\lesssim\frac{(1-r)\left(\log\frac1{1-r}\right)^p}{(1-r)^{\left(\lambda(r)\left(1-\frac1{\sigma_M(f)}\right)+1+\varepsilon\right)p}}.
  \end{split}
	\end{equation}
On the way to \cite[Lemma~1]{Li_mp} it is proved that
  \begin{equation*}
  \int_{r-1}^{1-r} |\log |{B_2}(re^{i\theta})||^pd\theta +\int_{r-1}^{1-r} |\log |{B_3}(re^{i\theta})||^pd\theta  \lesssim (1-r) (\# F)^p.
	\end{equation*}
Taking into account \eqref{2.11l} we deduce
	\begin{equation}\label{e:b2_b3}
  \int_{r-1}^{1-r}|\log |{B_2}(re^{i\theta})||^pd\theta
	+\int_{r-1}^{1-r}|\log |{B_3}(re^{i\theta})||^pd\theta
	\lesssim (1-r)\psi\left(\frac 1{1-r}\right)^p.
	\end{equation}
Estimates \eqref{e:appl_lema9} and \eqref{e:b2_b3} now yield \eqref{2.9l}.
\end{proof}

We can now finish this section and the paper by proving Theorem~\ref{t:p_lower_order} and Corollary~\ref{c:m_p}.

\begin{proof}[Proof of Theorem~\ref{t:p_lower_order}]
The statement \eqref{e:mp_final-countdown} is valid by standard estimates if $\sigma_M(f)\le1$. Let now $f$ be an analytic function in $\D$ such that $1<\lambda_M(f)<\sigma_M(f)<\infty$, and let $\lambda$ be its generalized proximate order. Each generalized proximate order is a quasi proximate order. As an associated function one can take $A^*(t)=(1-t)^{-\lambda(t)}$ which is increasing for $r$ sufficiently close to $1$ because of (4'). Thus, the properties (1)--(7) of a quasi proximate order of the analytic function $f$ are satisfied. Further, let $s=[\rho^*(A^*)]+1$, where $\rho^*(A^*)$ is P\'olya's order of $A^*$. Consider the canonical product \eqref{e:canon_prod}, convergent by \cite[Lemma~9]{ChyShep}. It leads us to the factorization
\begin{equation}\label{e:fact}
  f(z)=z^m \mathcal{P}(z)g(z),\quad z\in\D,
\end{equation}
where $g$ is a nonvanishing analytic function in $\D$ and $m=m(f)\in\N\cup\{0\}$. By combining Theorem~\ref{t:tsuji} with Lemma~\ref{l:lin_can} we deduce that there exists $n_0\in\N$ such that, for $n\ge n_0$, the interval $[r_n,r_{n+1})$, where $r_n=1-2^{-n}$, contains a point $T_n$ for which the circle $\{ z:|z|=T_n\}$ does not intersect any of exceptional discs of Theorem~\ref{t:tsuji}, and
	\begin{equation}\label{e:prod_unif_est}
  |\log|\mathcal{P}(z)||
	\lesssim\psi\left(\frac1{1-|z|}\right)\log\frac1{1-|z|},\quad|z|=T_n,
	\end{equation}
where $\psi$ is defined by \eqref{Eq:def-psi}. By using factorization \eqref{e:fact} and the properties (5) and (7), we obtain
	$$
	\log|g(z)|
	\le\log|f(z)|+|\log|\mathcal{P}(z)||-m\log|z|
	\lesssim\psi\left(\frac1{1-|z|}\right)\log\frac1{1-|z|},\quad|z|=T_n.
	$$
The maximum modulus principle together with \eqref{Eq:def-psi} and the properties (5) and (6) then implies
	\begin{equation*}
	\begin{split}
  \log M(r,g)
	&\le\log M (T_{n+1}, f)
	\le\psi\left(\frac1{1-T_{n+1}}\right)\log\frac1{1-T_{n+1}}\\
	&\lesssim\frac{1}{(1-r)^{\lambda(r)\left(1-\frac1{\sigma_M(f)}\right)+ 1+2\varepsilon}},\quad r\in [r_n, r_{n+1}),\quad n\to\infty,
	\end{split}
	\end{equation*}
from which Theorem~\ref{t:cartright} yields
	\begin{equation*}
  |\log|g(z)||
	\lesssim\frac{1}{(1-r)^{(1+\e)\left(\lambda(r)\left(1-\frac1{\sigma_M(f)}\right)+ 1+2\varepsilon\right)}}, \quad r_{n_0}\le |z|<1.
	\end{equation*}
Therefore
	$$
	m_p(r,\log|g|)
	\lesssim\frac{1}{(1-r)^{(1+\e)\left(\lambda(r)\left(1-\frac1{\sigma_M(f)}\right)+ 1+2\varepsilon\right)}},\quad r\to1^-,
	$$
and $m_p(r,\log|f|)$ obeys the same upper bound by Minkowski's inequality and Lemma~\ref{l:lin1}. The statement \eqref{e:mp_final-countdown} of Theorem~\ref{t:p_lower_order} in the case $\sigma_M(f)>1$ follows by re-defining $\e$.

By arguing as in the last part of the proof of Lemma~\ref{l:i_est} we deduce \eqref{e:mp_final-countdown2} from \eqref{e:mp_final-countdown}. Details are omitted.
\end{proof}

\begin{proof}[Proof of Corollary~\ref{c:m_p}]
It follows from \eqref{e:mp_final-countdown2} that 
	$$
	\lambda_p(f)\le\left(\lambda_M(f)+\eta\right)\left(1-\frac1{\sigma_M(f)}\right)^++1+\varepsilon.
	$$
Since $\eta$ and $\varepsilon$ can be chosen arbitrarily small, the first inequality follows. The second inequality is a consequence of the estimate
	$$ 
	\log|f(re^{i\theta})|\lesssim \frac{m_p(\frac{1+r}{2}, \log|f|)}{(1-r)^{\frac 1p}},\quad r\to1^-,
	$$
which follows from the Poisson-Jensen formula and H\"older's inequality~\cite{Li_mp}.
\end{proof}

\end{document}